\documentclass[12pt]{article}
\usepackage{mathrsfs}
\usepackage{amsfonts,amscd,amsthm}
\usepackage{amsmath}
\usepackage{amssymb}
\usepackage{bbm}
\usepackage{graphicx}
\usepackage{enumerate}
\usepackage{geometry}
\usepackage{xcolor}
\usepackage{txfonts}
\usepackage{cite}
\usepackage{bbding}
\usepackage{comment}

\setlength{\textheight}{23cm} \setlength{\textwidth}{16.3cm}
\setlength{\topmargin}{-1.5cm} \setlength{\oddsidemargin}{0cm}
\newtheorem{definition}{Definition}[section]
\newtheorem{theorem}{Theorem}[section]
\newtheorem{corollary}{Corollary}[section]
\newtheorem{lemma}{Lemma}[section]

\newtheorem{proposition}{Proposition}[section]
\newtheorem{remark}{Remark}[section]
\newtheorem{example}{Example}[section]
\setlength{\parskip}{0.2cm}

\newcommand{\B}{{\mathbb{B}}}

\newcommand{\R}{\mathbbm{R}}
\newcommand{\Sph}{{\mathbb{S}}}

\DeclareMathOperator*\epi{epi}

\DeclareMathOperator*\bdry{bdry}
\DeclareMathOperator*\argmax{arg\,max}

\DeclareMathOperator*\co{conv}
\DeclareMathOperator*\cl{cl}
\DeclareMathOperator*\pos{pos}
\DeclareMathOperator*\inte{int}
\DeclareMathOperator*\ri{ri}

\DeclareMathOperator*\dom{dom}

\DeclareMathOperator*\gph{gph}
\DeclareMathOperator*\lip{lip}

\DeclareMathOperator*\rg{rg}
\begin{document}
\begin{center}
{\Large\Large\sc {\bf Lipschitz-like property relative to a set and the generalized Mordukhovich criterion}}
\end{center}

\begin{center}
{\sc K.\ W.\ Meng}
\\ {\small School of Economic Mathematics, Southwest University of Finance and Economics, Chengdu 611130, China}\\
Email: mengkw@swufe.edu.cn\\[0.5cm]

{\sc M.\ H.\ Li}
\\ {\small School of Mathematics and Big Data, Chongqing University of Arts and Sciences, Yongchuan, Chongqing, 402160, China}\\
Email: minghuali20021848@163.com\\[0.5cm]

{\sc W.\ F.\ Yao}\\ {\small Department of Applied Mathematics, The Hong Kong Polytechnic University,  Hong Kong}\\
Email: dorothy.wf.yao@connect.polyu.hk\\[0.5cm]

{\sc X.\ Q.\ Yang}\\ {\small Department of Applied Mathematics, The Hong Kong Polytechnic University,  Hong Kong}\\
Email: mayangxq@polyu.edu.hk
\end{center}

\begin{center}
\end{center}

 \noindent{\bf Abstract}: In this paper we will establish some necessary condition and sufficient condition respectively for a set-valued mapping to have the Lipschitz-like property relative to a closed set by employing regular normal cone and limiting normal cone of a restricted graph of the set-valued mapping. We will obtain a complete characterization for a set-valued mapping to have the Lipschitz-property relative to a closed and convex set by virtue of the projection of the coderivative onto a tangent cone. Furthermore, by introducing a projectional coderivative of set-valued mappings, we establish a verifiable generalized Mordukhovich criterion for the Lipschitz-like property relative to a closed and convex set. We will study the representation of the graphical modulus of a set-valued mapping relative to a closed and convex set by using the outer norm of the corresponding projectional coderivative value. For an extended real-valued function, we will apply the obtained results to investigate its Lipschitz continuity relative to a closed and convex set and the Lipschitz-like property of a level-set mapping relative to a half line.

\noindent {\bf Keywords}: Lipschitz-like property relative to a set, projectional coderivative, Mordukhovich criterion, level-set mapping, graphical modulus.

\section{Introduction}
Stability theory of set-valued mappings has been extensively investigated. The monographs \cite{roc,bs00,mor,mor18,dr14} contain a comprehensive presentation for the derivation of conditions ensuring various stability properties of set-valued mappings, including the Lipschitz-like property.

The Lipschitz-like property of set-valued mappings (known also as Aubin property, or pseudo-Lipschitz property) has been introduced in \cite{a84}. The Lipschitz-like property has been well studied in the literature by virtue of Mordukhovich criterion, which was initially developed by \cite{mor03} and a more direct proof was given in \cite{roc} by using the basic variational analysis tools. One important assumption when Mordukhovich criterion is applied is that the candidate parameter under consideration is in the interior of the domain of set-valued mappings. It is worth noting that this criterion has been applied in the study of the solution mapping of generalized equations, linear semi-infinite and infinite systems and a parametric linear constraint system in \cite{levym04,clmp09,hy16} respectively. The Lipschitz-like property of the stationary set of some constrained optimization problems has been explored in \cite{levym04,ly14}. A critical face condition was developed in \cite{dr96} as necessary and sufficient conditions for the Lipschitz-like property of the solution mapping of a linear variational inequality problem and a nonlinear variational inequality problem (via linearization) over a polyhedral set.   

Metric regularity relative to a set was studied in \cite{ai06,io15,ht15} by using strong slopes, and directional metric (sub)regularity, isolated calmness and Lipschitz-like property relative to a set were explored in \cite{g13,bgo19} by using directional limiting coderivative respectively.

In this paper we will study the Lipschitz-like property relative to a set for set-valued mappings. When the restricted set is closed, we will obtain some necessary condition and sufficient condition respectively for set-valued mappings to have the Lipschitz-like property relative to the set by employing regular normal cone and limiting normal cone of a restricted graph of the set-valued mapping and the tangent cone of the set near the candidate point. When the set is closed and convex, we will obtain a complete characterization for a set-valued mapping to have the Lipschitz-property relative to the set by virtue of the projection of the coderivative onto the tangent cone. Furthermore, by employing an outer limit of the projection of the coderivative onto the tangent cone, we will introduce a projectional coderivative of set-valued mappings and apply it to establish a verifiable generalized Mordukhovich criterion for the Lipschitz-like property relative to the set. We will study the representation of the graphical modulus of a set-valued mapping relative to a closed and convex set by using the outer norm of the corresponding projectional coderivative value. {When the set is merely closed, we will compare our sufficient condition with the one that is derived via the directional limiting coderivative in \cite{bgo19}.}

For an extended real-valued function, the Lipschitz continuity relative to a closed set is equivalent to that its profile mapping has Lipschitz-like property relative to the set. By virtue of this equivalence, we will apply the obtained results to derive a full characterization for an extended real-valued function to have the Lipschitz continuity relative to a closed and convex set of the domain, including that of a sublinear function.
Moreover, a reinterpretation of subgradients of an extended real-valued function $f$ has been pointed out in \cite[Theorem 9.41]{roc} by applying the Mordukhovich criterion. That is, level-set mapping $S: \alpha \mapsto  \{x | f(x) - \langle \bar{v}, x-\bar{x}\rangle \leq \alpha\}$ fails to have the Lipschitz-like property if and only if $\bar{v} \in \partial f(\bar{x})$.
Whenever $f$ is convex and $\bar{v} \in \partial f(\bar{x})$, $\bar{x}$ is clearly a minimum of $f(x) - \langle \bar{v}, x-\bar{x}\rangle$ implying that $\mbox{dom} S = \{ \alpha | \alpha \geq f(\bar{x})\}$ and hence that $f(\bar{x}) \notin \mbox{int(dom} S)$.
By virtue of the generalized Mordukhovich criterion, we will show that $S$ fails to have the Lipschitz-like property relative to $\mbox{dom} S$ at $f(\bar{x})$ for $\bar{x}$ if and only if $\bar{v}$ belongs to $\partial^>_{\bar{v}} f(\bar{x})$,  the outer limiting subdifferential set of $f$ at $\bar{x}$ with respect to $\bar{v}$, see its definition in Section 4. In the case of $\bar{v}=0$, $\partial^>_{\bar{v}} f(\bar{x})$ reduces to the outer limiting subdifferential set $\partial^> f(\bar{x})$, which has been studied extensively  for the study of error bounds in the literature, see  \cite{io08, knt10,fhko10,io15,ca2, lmy2017, ers2019}.

The organization of the paper is as follows. In Section 2, we will employ regular normal cone and limiting normal cone of a restricted graph of the set-valued mapping to obtain some neighborhood necessary condition and sufficient condition respectively for set-valued mappings to have the Lipschitz-like property relative to a closed set. We will introduce a projectional coderivative of set-valued mappings and apply it to establish a verifiable generalized Mordukhovich criterion for the Lipschitz-like property relative to a closed and convex set. In Section 3, we will obtain characterization of the Lipschitz continuity relative to a closed and convex set of an extended real-valued function. In Section 4, we will apply the obtained results to investigate the Lipschitz-like property relative to the half line for a level-set mapping.

Throughout the paper we use the standard notations of variational analysis; see the seminal book \cite{roc} by Rockafellar and Wets.   For readers' convenience, we also mention alternative names for some of notions used in this paper, see \cite{mor}.
The Euclidean norm of a vector $x$ is denoted by $||x||$, and the inner product of vectors $x$ and $y$ is denoted by $\langle x, y\rangle$. We denote by $[x]^\perp:=\{v\in \R^n\mid \langle v, x\rangle=0\}$ the orthogonal space of the vector $x$.
 Let $\B$ denote the closed unit Euclidean ball and let $\Sph$ denote the unit sphere. We denote by $\B_\delta(x)$ the closed ball centered at $x$ with radius $\delta>0$.

Let $A\subset \R^n$ be a nonempty set. We say that $A$ is locally closed at a point $x\in A$ if $A\cap U$ is closed for some closed neighborhood $U$ of $x$. We denote the interior,  the relative interior, the closure,  the boundary, the convex hull and the positive hull of $A$ respectively by $\inte A$,  $\ri A$, $\cl A$, $\bdry A$, $\co A$  and $\textup{pos}A:=\{0\}\cup \{\lambda x| x\in A$ and $\lambda>0\}$. We denote by $A^\perp:=\{v\in \R^n\mid \langle v, x\rangle=0\;\forall x\in A\}$ the orthogonal space of $A$, and by
\[
A^\infty:=\{x\in \R^n\mid \exists x_k\in A,\,\lambda_k\downarrow 0,\,\mbox{with}\,\lambda_kx_k\to x\}
\]
the horizon cone of $A$.  The polar cone of $A$ is defined by
\[
A^*:=\{v\in \R^n\mid \langle v, x\rangle\leq 0\;\forall x\in A\}.
\]
The support function $\sigma_A$  of $A$ is defined by
\[
\sigma_A(x):=\sup_{v\in A}\langle v, x\rangle.
\]
The indicator function $\delta_A$ of $A$ is defined by
\[
\delta_A(x):=\left\{
\begin{array}{ll}
0 &\mbox{if}\;x\in A,\\
+\infty &\mbox{otherwise}.
\end{array}
\right.
\]
The distance from $x$ to $A$ is defined by
\[
d(x,A):=\inf_{y\in A}||y-x||.
\]
The projection mapping ${\rm proj}_A$ is defined by
\[
{\rm proj}_A(x):=\{y\in A\mid \|y-x\|=d(x,A)\}.
\]
For a set $M\subset \R^n$, we denote the projections of $M$ on $A$  by
\[
{\rm proj}_AM:=\{y\in A\mid \exists x\in M,\,\mbox{with}\, \|y-x\|=d(x,A)\}.
\]
If $A=\emptyset$, we use the convention that $d(x, A):=+\infty$, ${\rm proj}_A(x):=\emptyset$, and ${\rm proj}_AM:=\emptyset$.
The excess of $A$ over another  nonempty  set $B\subset \R^n$ is defined by
\[
e(A,B):=\sup\{d(x, B)\mid x\in A\},
\]
with  the convention  that $e(A, B):=0$ for $A=\emptyset$.

Let $x\in A$. We use $T_A(x)$ to denote the  tangent/contingent cone to $A$ at $x$, i.e. $w\in T_A(x)$ if there exist sequences $t_k\downarrow 0$ and $\{w_k\}\subset \R^n$ with $w_k\rightarrow w$ and $x+ t_k w_k\in A \;\forall k$.
We denote by $N_A^{\rm prox}(x)$ the proximal normal cone to $A$ at $x$, i.e., $v\in N_A^{\rm prox}(x)$ if there exists some $t>0$ such that $x\in {\rm proj}_A(x+tv)$.
The regular/Fr\'{e}chet  normal cone, $\widehat{N}_A(x)$ to $A$ at $x$ is the polar cone of $T_A(x)$. A vector $v\in \R^n$ belongs to the (basic/limiting/Mordukhovich) normal cone $N_A(x)$ to $A$ at $x$, if there exist sequences $x_k\to x$ and $v_k\to v$ with $x_k\in A$ and $v_k\in \widehat{N}_A(x_k)$ for all $k$.   It is well known that
\[
N_A^{\rm prox}(x)\subset \widehat{N}_A(x)\subset N_A(x).
\]
$A$ is said to be regular at $x$ in the sense of Clarke if it is locally closed at $x$ and $\widehat{N}_A(x)=N_A(x)$.

Let $C\subset \R^n$ be a nonempty convex set.  A face of   $C$ is a convex subset $C'$ of $C$ such that every closed line segment in $C$ with a relative interior point in $C'$ has both endpoints in $C'$.  An exposed face of $C$ is the intersection of $C$ and a non-trivial supporting hyperplane to $C$.  In other words, $F$ is an exposed face of $C$ if and only if there is some $x\in \R^n$ such that $F=\argmax_{v\in C}\langle x, v\rangle$.  See the book \cite{rock70} for more details.

For a set-valued mapping $S:\R^n\rightrightarrows \R^m$, we denote by
\[
\gph S:=\{(x, u)\mid u\in S(x)\}\quad\mbox{and}\quad \dom S:=\{x\mid S(x)\not=\emptyset\}
\]
  the graph   and the domain of $S$, respectively. $S$ is said to be positively homogeneous if
  \[
  0\in S(0)\quad\mbox{and}\quad S(\lambda x)=\lambda S(x)\quad\mbox{for all}\; \lambda>0\;\mbox{and}\; x,
  \]
  or in other words, $\gph S$ is a cone. If $S$ is a positively homogeneous mapping, the outer norm  of $S$ is denoted and defined by
  \[
  |S|^+:=\sup_{x\in \B}\sup_{u\in S(x)}\|u\|,
  \]
  which is the infimum over all  constants $\kappa\geq 0$ such that $\|u\|\leq \kappa\|x\|$ for all pairs $(x, u)\in \gph S$.

   Consider a point $\bar{x}\in \dom S$. The outer limit of $S$ at $\bar{x}$ is defined by
\[
\limsup_{x\to \bar{x}}S(x):=\{u\in \R^m| \exists x_k\to \bar{x}, \exists u_k\to u\;\textup{with}\;u_k\in S(x_k)\}.
\]
$S$ is said to be outer semicontinuous  at $\bar{x}$ if
\[
\limsup_{x\to \bar{x}}S(x)\subset S(\bar{x}).
\]
   The  (normal) coderivative, the regular/Fr\'{e}chet coderivative and the proximal coderivative of $S$ at $\bar{x}$ for any $\bar{u}\in S(\bar{x})$ are respectively the mapping $D^*S(\bar{x}\mid \bar{u}):\R^m\rightrightarrows \R^n$ defined by
  \[
  x^*\in D^*S(\bar{x}\mid \bar{u})(u^*)\Longleftrightarrow (x^*, -u^*)\in N_{\gph S}(\bar{x}, \bar{u}),
  \]
  the mapping $\widehat{D}^*S(\bar{x}\mid \bar{u}):\R^m\rightrightarrows \R^n$ defined by
  \[
  x^*\in \widehat{D}^*S(\bar{x}\mid \bar{u})(u^*)\Longleftrightarrow (x^*, -u^*)\in \widehat{N}_{\gph S}(\bar{x}, \bar{u}),
  \]
  and
  the mapping $D^{\rm *prox}S(\bar{x}\mid \bar{u}):\R^m\rightrightarrows \R^n$ defined by
  \[
  x^*\in D^{\rm *prox}S(\bar{x}\mid \bar{u})(u^*)\Longleftrightarrow (x^*, -u^*)\in N^{\rm prox}_{\gph S}(\bar{x}, \bar{u}).
  \]
  Clearly, the following inclusions hold:
  \[
  \gph D^{\rm *prox}S(\bar{x}\mid \bar{u})\subset\gph \widehat{D}^*S(\bar{x}\mid \bar{u})\subset \gph D^*S(\bar{x}\mid \bar{u}).
   \]
  For a set $X\subset \R^n$, we denote by
  \[
  S|_X:=\left\{
  \begin{array}{ll}
    S(x) & \mbox{if}\;x\in X, \\[0.2cm]
    \emptyset & \mbox{if}\;x\not\in X,
  \end{array}
    \right.
  \]
  the restricted mapping of $S$ on $X$. It is clear to see that
  \[
  \gph S|_X=\gph S\cap (X\times \R^m)\quad\mbox{and}\quad \dom  S|_X=X\cap \dom S.
  \]

Let $g:\R^n\to \overline{\R}:=\R\cup\{\pm \infty\}$ be an extended real-valued function and let $\bar{x}$ be  a point with $g(\bar{x})$ finite.  We denote the epigraph and the domain of $g$ by
\[
\epi g:=\{(x,\alpha)\mid g(x)\leq \alpha\}\quad\mbox{and}\quad \dom g:=\{x\mid g(x)<+\infty\},
\]
respectively.  $g$ is said to be locally lower semicontinuous (for short, lsc) at $\bar{x}$, if there is an $\epsilon > 0$ such that
all sets of the form $\{x\mid \|x-\bar{x}\|\leq \epsilon,\,g(x)\leq \alpha\}$ with $\alpha\leq g(\bar{x})+\epsilon$ are closed, see \cite[Definition 1.33]{roc}.  It is well-known that $g$ is locally lsc at $\bar{x}$ if and only if $\epi g$ is locally closed at $\left(\bar{x}, g(\bar{x})\right)$.

The vector $v\in \R^n$ is a regular/Fr\'{e}chet subgradient of $g$ at $\bar{x}$, written $v\in \widehat{\partial} g(\bar{x})$, if
\[
g(x)\geq g(\bar{x})+\langle v, x-\bar{x}\rangle+o(||x-\bar{x}||).
\]
The vector $v\in \R^n$ is a (general/basic) subgradient of $g$ at $\bar{x}$, written $v\in \partial g(\bar{x})$, if there exist sequences $x_k\to \bar{x}$ and $v_k\to v$ with $g(x_k)\to g(\bar{x})$ and $v_k\in \widehat{\partial} g(x_k)$.  The subdifferential set $\partial g(\bar{x})$ is also referred to as limiting/Mordukhovich subdifferential.   The vector $v\in \R^n$ is a horizon/singular subgradient of $g$ at $\bar{x}$, written $v\in \partial^\infty g(\bar{x})$, if there are sequences $x_k\to \bar{x}$ with $g(x_k)\to g(\bar{x})$, $\lambda_k\downarrow 0$ and $v_k\in \widehat{\partial} g(x_k)$ such that $\lambda_kv_k\to v$.

The outer limiting subdifferential of $g$ at $\bar{x}$ introduced in \cite{io08}  is denoted and defined as follows:
    \[
\partial^>  g(\bar{x}):=\left\{\lim_{k\rightarrow +\infty}v_k\mid \exists x_k\rightarrow_g \bar{x},\;\forall k:\,g(x_k)>g(\bar{x})\,\mbox{and}\,v_k\in \partial g(x_k)\right\},
 \]
 which coincides with the outer limiting subdifferential set $\partial^>_{0}g(\bar{x})$ of $g$ at $\bar{x}$ with respect to $0$,  a notion newly introduced in  Section \ref{sec-level-set and ss}. Note that a closely related notion, called the right-sided subdifferential, was given in \cite{mor05} and defined by using a weak inequality instead of the strict inequality used here. See  \cite[Definition 1.100 and Theorem 1.101]{mor} for more details on  the right-sided subdifferential and its applications.

\section{The general case}
In this section, we consider a  set-valued mapping $S: \R^n\rightrightarrows  \R^m$, a pair $(\bar{x}, \bar{u})\in \gph S$  and a set $X\subset\R^n$ with $\bar{x}\in X$. First, we formally give the definition  for the Lipschitz-like property of $S$ relative to $X$ at $\bar{x}$ for $\bar{u}$ and accordingly  the definition for the graphical modulus $\lip_X S(\bar{x}\mid\bar{u})$. In the case of $X$ being closed,  by using the (regular) coderivative  of the restricted mapping $S|_X$,  we then present a (necessary) sufficient condition  for the relative Lipschitz-like property. Finally by filling the gap between these necessary and sufficient conditions, we will focus on the case that $X$ is closed and convex. We give two characterizations for the relative Lipschitz-like property and two corresponding formulas for the graphical modulus $\lip_X S(\bar{x}\mid\bar{u})$ when $X$ is closed and convex: one is a uniform boundedness condition getting all nearby points involved, and the other one is a point-based condition based on the so-called projectional coderivative, a newly introduced notion. In both of these characterizations, the projections of the coderivative  of  the restricted mapping $S|_X$ onto the tangent cones to $X$ play a key role.

The definition below is borrowed from \cite[Definition 9.36]{roc}.
\begin{definition}[Lipschitz-like property relative to a set]\label{def-lip-like}
A mapping $S: \R^n\rightrightarrows  \R^m$ has the Lipschitz-like property relative to $X$ at $\bar{x}$ for $\bar{u}$, where $\bar{x}\in X$ and $\bar{u}\in S(\bar{x})$, if $\gph S$ is locally closed at $(\bar{x}, \bar{u})$ and there are neighborhoods $V\in \mathcal{N}(\bar{x})$, $W\in \mathcal{N}(\bar{u})$, and a constant $\kappa \in \R_+$ such that
\begin{equation}\label{just-def}
S(x')\cap W\subset S(x)+\kappa\|x'-x\|\B\quad \forall x,x'\in X\cap V.
\end{equation}
The graphical modulus of $S$ relative to $X$ at $\bar{x}$ for $\bar{u}$ is then
\[
\begin{array}{ll}
\lip_X S(\bar{x}\mid\bar{u}):=\inf\;\{\;\kappa\geq 0&\mid  \exists V\in \mathcal{N}(\bar{x}), W\in \mathcal{N}(\bar{u}),\;\mbox{such that}\\[0.5cm]
&S(x')\cap W\subset S(x)+\kappa\|x'-x\|\B\quad \forall x,x'\in X\cap V\;\}
\end{array}
\]
\end{definition}

In the case of $X$ being  closed, we first  present a necessary condition for the Lipschitz-like property relative to $X$ by using the tangent cone $T_X(x)$ and the regular  coderivatives $\widehat{D}^*S|_X(x\mid u)$ of $S|_X$ for all nearby points $(x, u)$ of $(\bar{x}, \bar{u})$ in $\gph S|_X$.
\begin{theorem}[Necessity]\label{theo-nece}
Consider $S: \R^n\rightrightarrows  \R^m$, $\bar{x}\in X\subset \R^n$, $\bar{u}\in S(\bar{x})$ and $\kappa\geq 0$. Suppose that   $X$ is closed.
If $S$ has the Lipschitz-like property relative to $X$ at $\bar{x}$ for $\bar{u}$ with constant $\kappa$,   then  the condition
\begin{equation}\label{key-youjie}
\max_{w\in T_{X}(x)\cap \Sph}\langle x^*, w\rangle \leq \kappa \|u^*\|\quad\forall x^*\in \widehat{D}^*S|_X(x\mid u)(u^*)
\end{equation}
holds  for all $(x,u)$ close enough to $(\bar{x}, \bar{u})$ in $\gph S|_X$.
 \end{theorem}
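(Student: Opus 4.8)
The plan is to deduce the pointwise coderivative inequality \eqref{key-youjie} from the defining metric inclusion \eqref{just-def} of the relative Lipschitz-like property, by first transferring it into a statement about the tangent cone to the restricted graph and then dualizing. The bridge is the following claim: for every $(x,u)$ sufficiently close to $(\bar x,\bar u)$ in $\gph S|_X$ and every $w\in T_X(x)$, there exists $z\in\R^m$ with $\|z\|\le\kappa\|w\|$ such that $(w,z)\in T_{\gph S|_X}(x,u)$. Granting this, the conclusion follows quickly. Since $\widehat N_{\gph S|_X}(x,u)$ is by definition the polar of $T_{\gph S|_X}(x,u)$, any $x^*\in\widehat D^*S|_X(x\mid u)(u^*)$ satisfies $\langle x^*,w'\rangle-\langle u^*,z'\rangle\le 0$ for every $(w',z')\in T_{\gph S|_X}(x,u)$. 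Applying this to the pair $(w,z)$ furnished by the claim and using Cauchy--Schwarz gives $\langle x^*,w\rangle\le\langle u^*,z\rangle\le\|u^*\|\,\|z\|\le\kappa\|u^*\|\,\|w\|$; specializing to $w\in T_X(x)\cap\Sph$ and taking the maximum yields \eqref{key-youjie}, with the usual convention that the maximum over the empty set (which occurs precisely when $T_X(x)=\{0\}$) is $-\infty$, so that the inequality holds trivially in that case.

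To establish the claim I would argue directly from Definition~\ref{def-lip-like}. Fix $(x,u)$ close to $(\bar x,\bar u)$ in $\gph S|_X$, so that in particular $x\in X\cap V$ and $u\in S(x)\cap W$, and fix $w\in T_X(x)$. By definition of the tangent cone there are $t_k\downarrow 0$ and $w_k\to w$ with $x+t_kw_k\in X$, and for $k$ large one also has $x+t_kw_k\in X\cap V$. Since $u\in S(x)\cap W$ and both $x$ and $x+t_kw_k$ lie in $X\cap V$, the inclusion \eqref{just-def}, with its two arguments taken to be $x$ and $x+t_kw_k$ respectively, yields $u\in S(x+t_kw_k)+\kappa t_k\|w_k\|\B$, hence a point $u_k\in S(x+t_kw_k)$ with $\|u_k-u\|\le\kappa t_k\|w_k\|$. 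Setting $z_k:=(u_k-u)/t_k$, we obtain $(x+t_kw_k,\,u+t_kz_k)\in\gph S|_X$ together with $\|z_k\|\le\kappa\|w_k\|$, so $\{z_k\}$ is bounded; passing to a subsequence, $z_k\to z$ with $\|z\|\le\kappa\|w\|$. As $t_k\downarrow 0$ and $(w_k,z_k)\to(w,z)$, this exhibits $(w,z)\in T_{\gph S|_X}(x,u)$, proving the claim.

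The main point requiring care is the claim, and within it two matters. First, the inclusion \eqref{just-def} must be invoked in the correct direction, namely from the base point $x$ carrying the preselected $u$ toward the perturbed points $x+t_kw_k$; this is what produces a selection $u_k\in S(x+t_kw_k)$ rather than one in $S(x)$, and thereby a genuine tangent direction at $(x,u)$. Second, there is the bookkeeping ensuring that all relevant points stay admissible, i.e. $x,\,x+t_kw_k\in X\cap V$ and $u\in W$, which is exactly where the hypothesis that $(x,u)$ be close enough to $(\bar x,\bar u)$ is used. The restriction to $X$ is structurally essential: only tangent directions $w\in T_X(x)$ keep the perturbed points $x+t_kw_k$ inside $X$, so that $u_k\in S(x+t_kw_k)=S|_X(x+t_kw_k)$ stays in the restricted graph. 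This is precisely why the tangent cone $T_X(x)$ appears on the left-hand side of \eqref{key-youjie}, in place of the full outer-norm estimate $\|x^*\|\le\kappa\|u^*\|$ that arises in the classical unrestricted Mordukhovich criterion.
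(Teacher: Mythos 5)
Your proof is correct, and its engine is the same as the paper's: both arguments exploit the Lipschitz-like inclusion \eqref{just-def} in the same direction --- from the base point $x$ carrying $u$ toward the perturbed points $x+t_kw_k$ --- to manufacture selections $u_k\in S(x+t_kw_k)$ with $\|u_k-u\|\le \kappa t_k\|w_k\|$, with the same bookkeeping that keeps all points in $X\cap V$ and $u$ in $W$. Where you genuinely differ is in how this construction is converted into \eqref{key-youjie}. The paper feeds the sequence $(x_k,u_k)\to (x,u)$ directly into the limsup (``little-$o$'') characterization of regular normals from \cite[Definition 6.3]{roc} and estimates the difference quotient, never mentioning the tangent cone of the graph. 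You instead isolate a primal lemma --- the relative Lipschitz-like property yields, for each $w\in T_X(x)$, some $z$ with $\|z\|\le\kappa\|w\|$ and $(w,z)\in T_{\gph S|_X}(x,u)$, i.e.\ a graphical-derivative bound that is the relative analogue of the classical derivative criterion for the Aubin property --- and then dualize via $\widehat{N}_{\gph S|_X}(x,u)=\bigl(T_{\gph S|_X}(x,u)\bigr)^*$. Your route has the advantage of invoking only this paper's own definition of the regular normal cone as the polar of the tangent cone (the paper's proof tacitly uses the equivalence of that definition with the little-$o$ one, which is \cite[Proposition 6.5]{roc}), and the primal claim is of independent interest; the small cost is the extra compactness/subsequence step needed to extract the limit $z$, which the paper's direct quotient estimate avoids.
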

\begin{proof} As $S$ has the Lipschitz-like property relative to $X$ at $\bar{x}$ for $\bar{u}$ with constant $\kappa$,   there exist   some  neighborhoods $V\in \mathcal{N}(\bar{x})$ and $W\in \mathcal{N}(\bar{u})$  such that
\begin{equation}\label{zkl}
S(x')\cap W\subset S(x)+\kappa\|x'-x\|\B\quad \forall x,x'\in X\cap V.
\end{equation}
 Without loss of generality, we can assume that the sets $V$ and $W$ are open. Let $x^*\in \widehat{D}^*S|_X(x\mid u)(u^*)$ (i.e., $(x^*,-u^*)\in \widehat{N}_{\gph S\cap (X\times \R^m)}(x,u)$) with $(x,u)\in \gph S|_X\cap (V\times W)$, and let $w\in T_X(x)\cap \Sph$. By the definition of tangent cone,  there exists  some $\{x_k\}\subset X\backslash\{x\}$ such that $x_k\rightarrow x$ and
\begin{equation}\label{kaixin}
\frac{x_k-x}{\|x_k-x\|}\rightarrow w.
\end{equation}
Clearly, there is some $k_0$ such that  $x_k\in X\cap V$ for all  $k\geq k_0$.   This, together with the facts that $x\in X\cap V$ and  $u\in S(x)\cap W$,  implies by (\ref{zkl}) the existence of $u_k\in S(x_k)$  such that
\begin{equation}\label{mantou}
\|u_k-u\|\leq \kappa \|x_k-x\|\quad  \forall k\geq k_0.
\end{equation}
Clearly, we have $u_k\rightarrow u$. In view of $(x^*,-u^*)\in \widehat{N}_{\gph S|_X}(x,u)$, we get from the definition of regular normal cone (cf. \cite[Defition 6.3 or 6(5)]{roc}) that
\begin{equation}\label{jianhua}
\lim_{k\rightarrow +\infty}\frac{\max\{\langle(x^*, -u^*), (x_k-x, u_k-u)\rangle, 0\}}{\|x_k-x\|+\|u_k-u\|}=0.
\end{equation}
By (\ref{mantou}), we have
\[
\frac{\max\{\langle(x^*, -u^*), (x_k-x, u_k-u)\rangle, 0\}}{\|x_k-x\|+\|u_k-u\|}\geq \frac{\max\{\langle x^*, x_k-x\rangle-\kappa\|u^*\|\|x_k-x\|, 0\}}{(1+\kappa)\|x_k-x\|}\quad   \forall k\geq k_0,
\]
which together with (\ref{kaixin}) and (\ref{jianhua}) implies the inequality
$ \langle x^*, w\rangle \leq \kappa \|u^*\|$ and hence (\ref{key-youjie}). This completes the proof. \end{proof}

In the case of $X$ being closed, we now present a sufficient condition for the Lipschitz-like property relative to $X$ by using the closure of the generated cone  $ \cl\pos (X-x)$ and the  coderivatives $D^*S|_X(x\mid u)$ of $S|_X$ for all nearby points $(x, u)$ of $(\bar{x}, \bar{u})$ in $\gph S|_X$. In our proof,  the Ekeland's variational principle plays a key role.
\begin{theorem}[Sufficiency]\label{theo-suff}
Consider $S: \R^n\rightrightarrows  \R^m$, $\bar{x}\in X\subset \R^n$, $\bar{u}\in S(\bar{x})$ and $\tilde{\kappa}>\kappa>0$. Suppose that $\gph S$ is locally closed at $(\bar{x}, \bar{u})$ and that $X$ is closed.   If the  condition
\begin{equation}\label{key-youjie000}
\max_{w\in \cl\pos (X-x)\cap \Sph}\langle x^*, w\rangle \leq \kappa \|u^*\|\quad \forall x^*\in D^*S|_X(x\mid u)(u^*)
\end{equation}
holds for all  $(x,u)$ close enough to $(\bar{x}, \bar{u})$ in $\gph S|_X$, then $S$ has the Lipschitz-like property relative to $X$ at $\bar{x}$ for $\bar{u}$ with  constant $\tilde{\kappa}$.
\end{theorem}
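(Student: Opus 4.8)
The plan is to argue by contradiction, using Ekeland's variational principle to manufacture, from a hypothetical failure of the relative Lipschitz-like property, a nearby point of $\gph S|_X$ at which the coderivative inequality \eqref{key-youjie000} is violated. Since $\gph S$ is locally closed at $(\bar x,\bar u)$ and $X$ is closed, the restricted graph $\gph S|_X=\gph S\cap(X\times\R^m)$ is locally closed at $(\bar x,\bar u)$; I fix a radius $\delta>0$ so that $\gph S|_X\cap \B_\delta(\bar x,\bar u)$ is closed. Suppose $S$ fails to have the Lipschitz-like property relative to $X$ at $\bar x$ for $\bar u$ with constant $\tilde\kappa$. Then, shrinking the neighborhoods in Definition \ref{def-lip-like} to $\B_{1/k}(\bar x)$ and $\B_{1/k}(\bar u)$, I obtain for each $k$ points $x_k,x'_k\in X$ and $u'_k\in S(x'_k)$ with $x_k,x'_k\to\bar x$, $u'_k\to\bar u$, and $d\big(u'_k,S(x_k)\big)>\tilde\kappa\|x_k-x'_k\|$. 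In particular $x_k\ne x'_k$, so $\alpha_k:=\tilde\kappa\|x_k-x'_k\|>0$ and $\alpha_k\to0$.

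For each $k$ I consider on the closed set $\gph S|_X\cap \B_\delta(\bar x,\bar u)$ the lower semicontinuous, bounded-below function $F_k(x,u):=\|u-u'_k\|+\tilde\kappa\|x-x_k\|$. For large $k$ the point $(x'_k,u'_k)$ lies in this set and satisfies $F_k(x'_k,u'_k)=\alpha_k$, so it is an $\alpha_k$-approximate minimizer. Applying Ekeland's variational principle with parameter $\lambda_k:=\sqrt{\alpha_k}$ yields $(\hat x_k,\hat u_k)\in\gph S|_X$ such that $F_k(\hat x_k,\hat u_k)\le\alpha_k$, $\|(\hat x_k,\hat u_k)-(x'_k,u'_k)\|\le\lambda_k$, and $(\hat x_k,\hat u_k)$ minimizes $F_k(x,u)+(\alpha_k/\lambda_k)\|(x,u)-(\hat x_k,\hat u_k)\|$ over $\gph S|_X$. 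Since $\lambda_k\to0$ and $(x'_k,u'_k)\to(\bar x,\bar u)$, I get $(\hat x_k,\hat u_k)\to(\bar x,\bar u)$, so for large $k$ this point is interior to $\B_\delta(\bar x,\bar u)$ and \eqref{key-youjie000} holds there. Moreover $\hat x_k\ne x_k$: otherwise $F_k(\hat x_k,\hat u_k)=\|\hat u_k-u'_k\|\ge d\big(u'_k,S(x_k)\big)>\alpha_k$, a contradiction.

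Next I read off a coderivative element. The minimized objective is a sum of norms (hence convex and globally Lipschitz) plus the indicator of $\gph S|_X$, so Fermat's rule together with the sum rule for limiting subdifferentials (cf. \cite{roc}) gives, with $z=(x,u)$ and $\hat z_k=(\hat x_k,\hat u_k)$, an element $(\eta^x_k,\eta^u_k)\in\partial F_k(\hat z_k)+(\alpha_k/\lambda_k)\B$ with $(-\eta^x_k,-\eta^u_k)\in N_{\gph S|_X}(\hat x_k,\hat u_k)$, that is, $-\eta^x_k\in D^*S|_X(\hat x_k\mid\hat u_k)(\eta^u_k)$. Computing the subdifferential of each norm, the $u$-part obeys $\|\eta^u_k\|\le 1+\alpha_k/\lambda_k$, while, because $\hat x_k\ne x_k$, the $x$-part equals $\tilde\kappa\,(\hat x_k-x_k)/\|\hat x_k-x_k\|$ up to a penalty vector of norm at most $\alpha_k/\lambda_k$.

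Finally I test \eqref{key-youjie000} against the admissible direction $w_k:=(x_k-\hat x_k)/\|x_k-\hat x_k\|$. Since $x_k,\hat x_k\in X$, we have $x_k-\hat x_k\in X-\hat x_k$, so $w_k\in\cl\pos(X-\hat x_k)\cap\Sph$. From the structure of $\eta^x_k$, $\langle -\eta^x_k,w_k\rangle\ge\tilde\kappa-\alpha_k/\lambda_k$, whereas \eqref{key-youjie000} at $(\hat x_k,\hat u_k)$ forces $\langle -\eta^x_k,w_k\rangle\le\kappa\|\eta^u_k\|\le\kappa\,(1+\alpha_k/\lambda_k)$. Combining the two estimates gives $\tilde\kappa-\kappa\le(1+\kappa)\,\alpha_k/\lambda_k=(1+\kappa)\sqrt{\alpha_k}\to0$, contradicting $\tilde\kappa>\kappa$. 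I expect the main obstacle to be the bookkeeping around Ekeland's principle: choosing $\lambda_k$ so that simultaneously $\hat z_k\to(\bar x,\bar u)$ (in order to invoke \eqref{key-youjie000}) and the penalty error $\alpha_k/\lambda_k\to0$ (in order to close the contradiction), together with a careful justification of the sum rule and of the local closedness underpinning the lower semicontinuity required by Ekeland's principle.
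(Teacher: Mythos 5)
Your proof is correct and follows essentially the same route as the paper's: argue by contradiction, apply Ekeland's variational principle to a distance-type functional plus the indicator of $\gph S|_X$, use Fermat's rule with the sum rule for a Lipschitz function plus an lsc indicator to extract an element of $D^*S|_X$ at a nearby point, and test condition \eqref{key-youjie000} against the unit vector pointing from the Ekeland point toward the other point of $X$. The differences are only in bookkeeping: the paper minimizes the pure $x$-distance $\|x-x'\|$ and controls the $u$-component through a weighted Ekeland metric $p(x,u)=\beta\|x\|+\|u\|$ with the fixed penalty coefficient $\tfrac{2}{\kappa+\tilde{\kappa}}$, whereas you fold the $u$-distance into the objective $\|u-u'_k\|+\tilde{\kappa}\|x-x_k\|$ and let the penalty $\sqrt{\alpha_k}$ vanish along a sequence; both normalizations produce the same violation of the coderivative bound.
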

\begin{proof} Observing that  all the properties involved depend on the nature of $\gph S$ in an arbitrary small neighborhood of $(\bar{x}, \bar{u})$, there's  no harm, therefore, in assuming from now on that $\gph S$ is closed
in its entirety.

Let $0<\varepsilon<  \frac{\tilde{\kappa}-\kappa}{4\tilde{\kappa}}$.  Suppose by contradiction that $S$ does not have the Lipschitz-like property relative to $X$ at $\bar{x}$ for $\bar{u}$ with constant $\tilde{\kappa}$, meaning that there exist $x', x''\in \B_\varepsilon(\bar{x})\cap X$ with $x'\not=x''$, and $u''\in S(x'')\cap \B_\varepsilon(\bar{u})$ such that
\begin{equation}\label{fanmian}
d(u'', S(x'))>\tilde{\kappa}\|x''-x'\|:=\beta.
\end{equation}
Clearly, we have $0<\beta\leq 2\tilde{\kappa}\varepsilon$.

Define $\varphi:\R^n\times \R^m\rightarrow\R\cup\{+\infty\}$ by
\[
\varphi(x,u):=\|x-x'\|+\delta_{\gph S|_X}(x,u).
\]
Clearly,  $\varphi$ is lsc (due to  closedness of $\gph S$ and $X$) with $\inf \varphi$ being finite,
and
\[
\varphi(x'', u'')\leq \inf  \varphi+\frac{\beta}{\tilde{\kappa}}.
\]
By equipping the product space $\R^n\times \R^m$ with a norm $p$ defined by
\[
p(x,u):=\beta\|x\|+\|u\|,
\]
we   apply the Ekeland's variational principle to obtain some $(\tilde{x},\tilde{u})\in \R^n\times \R^m$ such that
\begin{equation}\label{per-dec}
p(\tilde{x}-x'', \tilde{u}-u'')\leq \frac{\kappa+\tilde{\kappa}}{2}\frac{\beta}{\tilde{\kappa}},
\end{equation}
\begin{equation}\label{per-fun}
 \varphi(\tilde{x}, \tilde{u})\leq\varphi(x'', u''),
\end{equation}
\begin{equation}\label{per-opt}
\displaystyle\arg\min_{x,\;u}\left\{\varphi(x, u)+\frac{2}{\kappa+\tilde{\kappa}}p(x-\tilde{x}, u-\tilde{u})\right\}=\{(\tilde{x}, \tilde{u})\}.
\end{equation}
From  (\ref{per-fun}), it follows that
\begin{equation}\label{near-gph}
(\tilde{x}, \tilde{u})\in \gph S\cap(X\times \R^m)=\gph S|_X
\end{equation}
and hence that
\[
\|\tilde{x}-x'\|\leq \|x''-x'\|.
\]
Then by the triangle inequality, we have
\begin{equation}\label{near-x}
\|\tilde{x}-\bar{x}\|\leq \|\tilde{x}-x'\|+\|x'-\bar{x}\|\leq \|x''-x'\|+\|x'-\bar{x}\|\leq \|x''-\bar{x}\|+2\|x'-\bar{x}\|\leq 3\varepsilon.
\end{equation}
From (\ref{per-dec}), it follows that \[\|\tilde{u}-u''\|\leq \frac{\kappa+\tilde{\kappa}}{2}\frac{\beta}{\tilde{\kappa}}<\beta\leq 2\tilde{\kappa}\varepsilon\] and
hence by the triangle inequality  that
\begin{equation}\label{near-u}
\|\tilde{u}-\bar{u}\|\leq \|\tilde{u}-u''\|+\|u''-\bar{u}\|\leq (2\tilde{\kappa}+1)\varepsilon.
\end{equation}
 So we have $\tilde{x}\not= x'$, for otherwise we have \[
 d(u'', S(x'))=d(u'', S(\tilde{x}))\leq \|\tilde{u}-u''\|<\beta,\]
  contradicting to (\ref{fanmian}). From (\ref{per-opt}) and the generalized
version of Fermat's rule \cite[Theorem 10.1]{roc}, it follows that
\begin{equation}\label{feierma}
(0,0)\in \partial(\psi+\delta_{\gph S|_X})(\tilde{x}, \tilde{u}),
\end{equation}
where
\[
\psi(x,u):=\|x-x'\|+\frac{2}{\kappa+\tilde{\kappa}}\displaystyle\left(\beta\|x-\tilde{x}\|+\|u-\tilde{u}\|\right).
\]
Clearly, $\psi$ is convex and Lipschitz continuous and in terms of closed unit balls $\B_1$ in $\R^n$ and $\B_2$ in $\R^m$,
\begin{equation}\label{jiandanjisuan}
\partial\psi (\tilde{x}, \tilde{u})=\left(\frac{\tilde{x}-x'}{\|\tilde{x}-x'\|}+\frac{2\beta}{\kappa+\tilde{\kappa}}\B_1\right)\times \frac{2}{\kappa+\tilde{\kappa}}\B_2.
\end{equation}
Applying the calculus rule for subgradients of Lipschitzian sums \cite[Exercise 10.10]{roc}, we deduce from (\ref{feierma}) that
\[
(0, 0)\in \partial\psi (\tilde{x}, \tilde{u})+N_{\gph S|_X}(\tilde{x}, \tilde{u}).
\]
This, together with (\ref{jiandanjisuan}), implies the existence of   $v_1\in \B_1$, $v_2\in \B_2$ and
\begin{equation}\label{near-normal}
(x^*, -u^*)\in N_{\gph S|_X}(\tilde{x}, \tilde{u})\Longleftrightarrow x^*\in D^*S|_X(\tilde{x}\mid \tilde{u})(u^*)
\end{equation}
 such that
\[
x^*=-\frac{\tilde{x}-x'}{\|\tilde{x}-x'\|}-\frac{2\beta}{\kappa+\tilde{\kappa}}v_1,
\]
and
\[
u^*=\frac{2}{\kappa+\tilde{\kappa}}v_2.
\]
Since   $\tilde{x},x'\in X$ with $\tilde{x}\not=x'$, we have
\[
w^*:=\frac{x'-\tilde{x}}{\|x'-\tilde{x}\|}\in \cl\pos(X-\tilde{x})\cap \Sph.
\]
Then we have
\[
\begin{array}{lll}
\langle x^*, w^*\rangle-\kappa \|u^*\|&=&1-\frac{2\beta}{\kappa+\tilde{\kappa}}\langle v_1, w^*\rangle-\frac{2\kappa}{\kappa+\tilde{\kappa}}\|v_2\|\\[0.3cm]
&\geq&1-\frac{2\beta}{\kappa+\tilde{\kappa}}-\frac{2\kappa}{\kappa+\tilde{\kappa}}\\[0.3cm]
&\geq&1-\frac{4\tilde{\kappa}\varepsilon+2\kappa}{\kappa+\tilde{\kappa}}\\[0.3cm]
&>&0,
\end{array}
\]
where the first inequality follows from the Cauchy-Schwarz inequality, the second  one   from the fact that $\beta\leq 2\tilde{\kappa}\varepsilon$, and the last one from our setting that $\varepsilon<  \frac{\tilde{\kappa}-\kappa}{4\tilde{\kappa}}$. Therefore, we have
\begin{equation}\label{near-kongzhi}
\displaystyle\max_{w\in \cl\pos(X-\tilde{x})\cap \Sph}\langle x^*, w\rangle>\kappa \|u^*\|.
\end{equation}
In view of (\ref{near-gph}-\ref{near-u}), (\ref{near-normal}-\ref{near-kongzhi}) and the fact that $\varepsilon$ could be any  number such that  $0<\varepsilon<  \frac{\tilde{\kappa}-\kappa}{4\tilde{\kappa}}$, we conclude that condition (\ref{key-youjie000}) cannot hold  for all  $(x,u)$ close enough to $(\bar{x}, \bar{u})$ in $\gph S|_X$, a contradiction. This completes the proof. \end{proof}

Whenever $X$ is not only closed but also convex, the gap between the previous necessary and sufficient conditions will be filled,  and even a formula for the graphical modulus ${\rm lip}_X S(\bar{x}\mid\bar{u})$ can be provided.
\begin{theorem}[Lipschitz-like property relative to a closed and convex set]\label{theo-point-convex}
 Consider $S: \R^n\rightrightarrows  \R^m$, $\bar{x}\in X\subset \R^n$ and $\bar{u}\in S(\bar{x})$. Suppose that $\gph S$ is locally closed at $(\bar{x}, \bar{u})$ and that $X$ is closed and convex. The following properties are equivalent:
 \begin{description}
   \item[(a)] $S$ has the Lipschitz-like property relative to $X$ at $\bar{x}$ for $\bar{u}$.
   \item[(b)]  There is some $\kappa\geq 0$ such that the condition
      \begin{equation}\label{key-ineq}
\|{\rm proj}_{T_X(x)}(x^*)\| \leq \kappa \|u^*\|\quad\forall x^*\in D^*S|_X(x\mid u)(u^*),
\end{equation}
holds for all $(x,u)$ close enough to $(\bar{x}, \bar{u})$ in $\gph S|_X$.
 \end{description}
 Moreover, we have
  \begin{equation}\label{first-form}
     {\rm lip}_X S(\bar{x}\mid\bar{u})=\displaystyle\limsup_{\tiny (x, u)\xrightarrow[]{\gph S|_X}(\bar{x}, \bar{u})} \sup_{u^*\in \B}\sup_{x^*\in D^*S|_X(x\mid u)(u^*)}\|{\rm proj}_{T_X(x)}(x^*)\|.
 \end{equation}
 Alternatively, the coderivative $D^*S|_X(x\mid u)$ in (\ref{key-ineq}) as well as in (\ref{first-form}) can be equivalently replaced by the regular coderivative $\widehat{D}^*S|_X(x\mid u)$ or  the proximal coderivative $D^{*{\rm prox}}S|_X(x\mid u)$.
\end{theorem}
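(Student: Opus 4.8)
The plan is to exploit convexity of $X$ to collapse the necessary condition of Theorem \ref{theo-nece} and the sufficient condition of Theorem \ref{theo-suff} onto one another, and then to translate both into the projection form (\ref{key-ineq}). Two elementary facts drive everything. First, for convex $X$ one has $T_X(x)=\cl\pos(X-x)$ at every $x\in X$, so the cone in (\ref{key-youjie}) and the one in (\ref{key-youjie000}) coincide and are a closed convex cone. Second, for any closed convex cone $K$ and any vector $z$, Moreau's decomposition $z={\rm proj}_K(z)+{\rm proj}_{K^*}(z)$ with ${\rm proj}_K(z)\perp{\rm proj}_{K^*}(z)$ yields
\[
\max_{w\in K\cap\B}\langle z,w\rangle=\|{\rm proj}_K(z)\|,
\]
attained at $w={\rm proj}_K(z)/\|{\rm proj}_K(z)\|$ when the projection is nonzero. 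Hence, for $\kappa\ge 0$, the sphere inequality $\max_{w\in K\cap\Sph}\langle z,w\rangle\le\kappa\|u^*\|$ and the projection inequality $\|{\rm proj}_K(z)\|\le\kappa\|u^*\|$ are equivalent (they differ only when ${\rm proj}_K(z)=0$, in which case both hold trivially). Applying this with $K=T_X(x)$ and $z=x^*$ converts (\ref{key-youjie}) and (\ref{key-youjie000}) into the projection condition (\ref{key-ineq}).

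With these translations I would run the following cycle. Write $(C_{\rm full})$, $(C_{\rm reg})$, $(C_{\rm prox})$ for condition (\ref{key-ineq}) stated with $D^*S|_X$, $\widehat D^*S|_X$, $D^{*{\rm prox}}S|_X$ respectively. The inclusions $\gph D^{*{\rm prox}}S|_X\subset\gph\widehat D^*S|_X\subset\gph D^*S|_X$ give $(C_{\rm full})\Rightarrow(C_{\rm reg})\Rightarrow(C_{\rm prox})$ for free. Theorem \ref{theo-nece} with the identity above gives (a)$\Rightarrow(C_{\rm reg})$: the Lipschitz-like property supplies a constant $\kappa$, and (\ref{key-youjie}) becomes $(C_{\rm reg})$. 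Theorem \ref{theo-suff} with $T_X=\cl\pos(X-\cdot)$ gives $(C_{\rm full})\Rightarrow$(a), since $(C_{\rm full})$ is exactly (\ref{key-youjie000}) in projection form, delivering the property with any $\tilde\kappa>\kappa$. It remains to prove $(C_{\rm reg})\Rightarrow(C_{\rm full})$; this closes the loop (a)$\Rightarrow(C_{\rm reg})\Rightarrow(C_{\rm full})\Rightarrow$(a), and the proximal version is swept in by the same argument.

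The heart of the matter, and the step I expect to be the main obstacle, is this upgrade of the coderivative. I would use the outer-limit representation of the limiting normal cone, $N_A(x)=\limsup_{x'\to x,\,x'\in A}\widehat N_A(x')$ (true by definition) and equally $N_A(x)=\limsup_{x'\to x,\,x'\in A}N^{\rm prox}_A(x')$ (a standard fact for closed $A$), applied to $A=\gph S|_X$: given $x^*\in D^*S|_X(x\mid u)(u^*)$, pick $(x_k,u_k)\to(x,u)$ in $\gph S|_X$ and (regular or proximal) normals $(x_k^*,-u_k^*)\to(x^*,-u^*)$, apply the nearby condition to get $\|{\rm proj}_{T_X(x_k)}(x_k^*)\|\le\kappa\|u_k^*\|$, and pass to the limit. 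The passage needs the lower semicontinuity
\[
\|{\rm proj}_{T_X(x)}(x^*)\|\le\liminf_{k\to\infty}\|{\rm proj}_{T_X(x_k)}(x_k^*)\|
\]
whenever $x_k\to x$ in $X$ and $x_k^*\to x^*$. Here convexity is essential: the map $x\mapsto T_X(x)$ is inner semicontinuous on a convex set (any $w=\lambda(y-x)\in\pos(X-x)$ is approximated by $\lambda(y-x_k)\in T_X(x_k)$), so through the representation $\|{\rm proj}_{T_X(x)}(x^*)\|=\sup_{w\in T_X(x)\cap\B}\langle x^*,w\rangle$ the projection norm is lower semicontinuous. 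Getting the direction of semicontinuity right is the delicate point: lower semicontinuity is precisely what lets the limiting value be dominated by $\liminf\kappa\|u_k^*\|=\kappa\|u^*\|$, whereas outer semicontinuity of the tangent cone, which fails here, would be useless.

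Finally, for the modulus formula (\ref{first-form}) I would make the equivalence quantitative by tracking constants. Set $\gamma(x,u):=\sup_{u^*\in\B}\sup_{x^*\in D^*S|_X(x\mid u)(u^*)}\|{\rm proj}_{T_X(x)}(x^*)\|$ and let $\mu_{D^*}$, $\mu_{\widehat D^*}$, $\mu_{D^{*{\rm prox}}}$ denote the corresponding limsups over $(x,u)\xrightarrow[]{\gph S|_X}(\bar x,\bar u)$. Positive homogeneity of the coderivative and of $x^*\mapsto{\rm proj}_{T_X(x)}(x^*)$ shows that $(C_{\rm full})$ holds with constant $\kappa$ on a neighborhood if and only if $\mu_{D^*}\le\kappa$. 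Theorem \ref{theo-suff} then gives $\lip_X S(\bar x\mid\bar u)\le\mu_{D^*}$ (letting $\tilde\kappa$ decrease to $\mu_{D^*}$), while Theorem \ref{theo-nece} together with the upgrade of the previous paragraph gives the reverse inequality, so the two agree. The same upgrade forces the three choices of coderivative to yield the same value: the trivial chain gives $\mu_{D^{*{\rm prox}}}\le\mu_{\widehat D^*}\le\mu_{D^*}$, while $(C_{\rm prox})\Rightarrow(C_{\rm full})$ makes the infimal admissible constants coincide, so all three equal $\lip_X S(\bar x\mid\bar u)$.
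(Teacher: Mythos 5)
Your proposal is correct and follows essentially the same route as the paper: the same reduction to Theorems \ref{theo-nece} and \ref{theo-suff} via $T_X(x)=\cl\pos(X-x)$, the same sphere-versus-projection identity (the paper cites the projection theorem where you invoke Moreau decomposition), and the same key upgrade from regular/proximal to limiting coderivatives using approximation of normals together with inner semicontinuity of $x\mapsto T_X(x)$ on a convex set (the paper cites \cite[Theorem 6.26]{roc} where you argue it by hand). The only cosmetic difference is in the modulus formula: you track constants directly via positive homogeneity, while the paper runs a contradiction with an explicit rescaling $w_k={\rm proj}_{T_X(x_k)}(x_k^*)$; these are the same idea in different packaging.
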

\begin{proof}  To show the equivalent replacement, assume that the following inequality holds for all $(x,u)$ close enough to $(\bar{x}, \bar{u})$ in $\gph S|_X$:
      \begin{equation}\label{temp-key-ine}
\|{\rm proj}_{T_X(x)}(x^*)\| \leq \kappa \|u^*\|\quad\forall x^*\in D^{*{\rm prox}}S|_X(x\mid u)(u^*).
\end{equation}
Let $(x,u)$ be close enough to $(\bar{x}, \bar{u})$ in $\gph S|_X$ and let $x^*\in D^*S|_X(x\mid u)(u^*)$. By definition we have $(x^*,-u^*)\in N_{\gph S|_X}(x,u)$. By the approximation principle of normals via proximal normals \cite[Exercise 6.18]{roc}, there are some $(x_k, u_k)\rightarrow (x, u)$ with $(x_k, u_k)\in \gph S|_X$ and $(x^*_k,-u^*_k)\in N^{{\rm prox}}_{\gph S|_X}(x_k,u_k)$ such that $(x^*_k, -u^*_k)\rightarrow (x^*, -u^*)$. It then follows from  (\ref{temp-key-ine}) that for all $k$ large enough,
\[
\max\left\{\max_{w\in T_{X}(x_k)\cap \Sph}\langle x^*_k, w\rangle,\; 0\right\}=\|{\rm proj}_{T_X(x_k)}(x^*_k)\| \leq \kappa \|u^*_k\|,
\]
where the equality follows from the projection theorem \cite[Exercise 12.22]{roc} for a closed and convex cone and its polar. So we have for all $k$ large enough,
\[
\langle x^*_k, w\rangle\leq \kappa \|u^*_k\|\quad\forall w\in  T_{X}(x_k)\cap \Sph.
\]
Let $w\in  T_{X}(x)\cap \Sph$ be given arbitrarily.  Since $X$ is closed and convex (implying that $w$ is a regular tangent vector to $X$ at $x$), it follows from regular tangent cone properties \cite[Theorem 6.26]{roc} that there exists some $w_k\in T_X(x_k)$ such that $w_k\rightarrow w$. So we have for all  $k$ large enough,
 \[
\langle x^*_k, \frac{w_k}{\|w_k\|}\rangle \leq \kappa \|u^*_k\|,
\]
from which, it follows that $\langle x^*, w\rangle \leq \kappa \|u^*\|$. As $w\in  T_{X}(x)\cap \Sph$ is given arbitrarily, we have
\[
\max_{w\in T_{X}(x)\cap \Sph}\langle x^*, w\rangle \leq \kappa \|u^*\|,
\]
or equivalently
\[
\|{\rm proj}_{T_{X}(x)}(x^*)\|=\max\left\{\max_{w\in T_{X}(x)\cap \Sph}\langle x^*, w\rangle,\; 0\right\} \leq \kappa \|u^*\|,
\]
where the equality follows also from the projection theorem \cite[Exercise 12.22]{roc}.  So starting from (\ref{temp-key-ine}), we  assert that the following condition holds for all $(x,u)$ close enough to $(\bar{x}, \bar{u})$ in $\gph S|_X$:
      \[
\|{\rm proj}_{T_X(x)}(x^*)\| \leq \kappa \|u^*\|\quad\forall x^*\in D^*S|_X(x\mid u)(u^*).
\]
This, together with the inclusions
 \[
 \gph D^{*{\rm prox}}S|_X(x\mid u)\subset \gph \widehat{D}^*S|_X(x\mid u)\subset \gph D^*S|_X(x\mid u),
 \]
  indicates that the coderivative $D^*S|_X(x\mid u)$ in (\ref{key-ineq}) as well as in (\ref{first-form})  can be equivalently replaced by the regular coderivative $\widehat{D}^*S|_X(x\mid u)$ or  the proximal coderivative $D^{*{\rm prox}}S|_X(x\mid u)$ as claimed.

  In what follows, let
  \[
 \beta:=\displaystyle\limsup_{(x, u)\overset{\gph S|_X}\longrightarrow (\bar{x}, \bar{u})\;\\ } \sup_{u^*\in \B}\sup_{x^*\in D^*S|_X(x\mid u)(u^*)}\|{\rm proj}_{T_X(x)}(x^*)\|.
  \]

[(a) $\Longrightarrow$ (b)] Assuming (a), we will show (b) by proving the inequality
\begin{equation}\label{jyc}
\beta\leq {\rm lip}_X S(\bar{x}\mid\bar{u}).
\end{equation}
Choose any $\kappa\in ({\rm lip}_X S(\bar{x}\mid\bar{u}), +\infty)$. Then $S$ has the Lipschitz-like property relative to $X$ at $\bar{x}$ for $\bar{u}$ with constant $\kappa$. It then follows from Theorem \ref{theo-nece} that the following condition  holds  for all $(x,u)$ close enough to $(\bar{x}, \bar{u})$ in $\gph S|_X$:
\begin{equation}\label{key-youjie111}
\max_{w\in T_{X}(x)\cap \Sph}\langle x^*, w\rangle \leq \kappa \|u^*\|\quad \forall x^*\in \widehat{D}^*S|_X(x\mid u)(u^*).
\end{equation}
By the same argument used earlier and the equivalent replacement as we have already shown, we assert that the following condition  holds  for all $(x,u)$ close enough to $(\bar{x}, \bar{u})$ in $\gph S|_X$:
      \[
\|{\rm proj}_{T_X(x)}(x^*)\| \leq \kappa \|u^*\|\quad\forall x^*\in D^*S|_X(x\mid u)(u^*),
\]
which implies (b) and hence the inequality $\beta\leq \kappa$. So the inequality (\ref{jyc}) follows.

[(b) $\Longrightarrow$ (a)] Assuming (b), we will show (a) by proving the inequality
\begin{equation}\label{jyccc}
{\rm lip}_X S(\bar{x}\mid\bar{u})\leq \beta,
\end{equation}
from which the equality (\ref{first-form})  follows as the inequality in the other direction has been proved earlier.
  Suppose by contradiction that the inequality (\ref{jyccc}) does not hold. Choose any  $\kappa'$ such that $\beta<\kappa'< {\rm lip}_X S(\bar{x}\mid\bar{u})$.    Clearly, $S$ fails to have  the Lipschitz-like property relative to $X$ at $\bar{x}$ for $\bar{u}$ with constant $\kappa'$.  In view of the fact that  $T_X(x)=\cl\pos(X-x)$ for all $x\in X$ due to $X$ being closed and convex, we deduce from Theorem \ref{theo-suff} that the inequality
\begin{equation}\label{zjs000111}
\max_{w\in T_X(x)\cap \Sph}\langle x^*, w\rangle \leq \kappa' \|u^*\|
\end{equation}
cannot be fulfilled  for all  $(x,u)$ close enough to $(\bar{x}, \bar{u})$ in $\gph S|_X$ and  $(x^*,-u^*)\in N_{\gph S|_X}(x,u)$. By the same argument used earlier, the inequality (\ref{zjs000111}) amounts to
\[
\|{\rm proj}_{T_{X}(x)}(x^*)\|\leq \kappa' \|u^*\|.
\]
 So there exist  some
$(x_k,u_k)\to(\bar{x},\bar{u})$ with $(x_k,u_k)\in \gph S|_X$ and some $(x_k^*,-u_k^*)\in N_{\gph S|_X}(x_k, u_k)$ such that
 $\|{\rm proj}_{T_{X}(x_k)}(x^*_k)\|> \kappa' \|u^*_k\|$   for all $k$. Let $w_k:={\rm proj}_{T_{X}(x_k)}(x^*_k)$ for all $k$.  Clearly, we have $\|w_k\|>0$ for all $k$. Then we have for all $k$,
\[
\kappa'\frac{\|u_k^*\|}{\|w_k\|}<1,
\]
 and
\[
{\rm proj}_{T_{X}(x_k)}\left(\kappa'\frac{x^*_k}{\|w_k\|}\right)=\kappa'\frac{w_k}{\|w_k\|}.
\]
Since $\kappa'(\frac{x^*_k}{\|w_k\|},-\frac{u_k^*}{\|w_k\|})\in N_{\gph S|_X}(x_k, u_k)$ or equivalently $\kappa'\frac{x^*_k}{\|w_k\|}\in D^*S|_X(x_k\mid u_k)(\kappa'\frac{u_k^*}{\|w_k\|})$ for all $k$, we have
\[
\beta\geq \displaystyle\limsup_{k \to +\infty\;\\ } \left\|{\rm proj}_{T_{X}(x_k)}\left(\kappa'\frac{x^*_k}{\|w_k\|}\right)\right\|=\kappa',
\]
contradicting to the assumption that $\beta<\kappa'$.    This completes the proof. \end{proof}

Motivated from Theorem \ref{theo-point-convex}, we can provide a point-based criterion for the relative  Lipschitz-like property via the projectional coderivative defined below by first projecting the coderivative for all nearby points onto the tangent cones and then taking the outer limits for the projections.
\begin{definition}[Projectional coderivatives]\label{def-proj-outer-norm}
Consider a mapping $S: \R^n\rightrightarrows \R^m$ and  a point $\bar{x}\in X\subset \R^n$.
The  projectional  coderivative of $S$ at $\bar{x}$ for any $\bar{u}\in S(\bar{x})$ with respect to $X$ is the mapping $D^*_{X}S(\bar{x}\mid \bar{u}):\R^m\rightrightarrows \R^n$ defined by
\[
x^*\in D^*_{X}S(\bar{x}\mid \bar{u})(u^*)\Longleftrightarrow (x^*, -u^*)\in \limsup_{\tiny (x,u)\xrightarrow[]{\gph S|_X}(\bar{x}, \bar{u})}
{\rm proj}_{T_X(x)\times \R^m}N_{\gph S|_X}(x,u).
\]
That is, $x^*\in D^*_{X}S(\bar{x}\mid \bar{u})(u^*)$ if and only if there are some $(x_k,u_k){\tiny \xrightarrow[]{\gph S|_X}}(\bar{x},\bar{u})$ and
$x_k^*\in D^*S|_X(x_k\mid u_k)(u_k^*)$ such that $u_k^*\to u^*$ and ${\rm proj}_{T_X(x_k)}(x_k^*)\to x^*$. Here the notation  $D^*_{X}S(\bar{x}\mid \bar{u})$   is simplified to   $D^*_{X}S(\bar{x})$ when $S$ is single-valued at $\bar{x}$, i.e., $S(\bar{x}) =\{\bar{u}\}$.
\end{definition}

\begin{theorem}[generalized Mordukhovich criterion]\label{theo-convex-aubin}
 Consider $S: \R^n\rightrightarrows  \R^m$, $\bar{x}\in X\subset \R^n$ and $\bar{u}\in S(\bar{x})$. Suppose that $\gph S$ is locally closed at $(\bar{x}, \bar{u})$ and that $X$ is closed and convex.  The following properties are equivalent:
 \begin{description}
   \item[(a)] $S$ has the Lipschitz-like property relative to $X$ at $\bar{x}$ for $\bar{u}$.
   \item[(b)] $D^*_{X}S(\bar{x}\mid\bar{u})(0)=\{0\}$.
   \item[(c)] $|D^*_X S(\bar{x}\mid\bar{u})|^+<+\infty$.
 \end{description}
Furthermore, we have
\begin{equation}\label{wanmei}
{\rm lip}_X S(\bar{x}\mid\bar{u})=|D^*_X S(\bar{x}\mid\bar{u})|^+.
\end{equation}
\end{theorem}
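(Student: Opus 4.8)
The plan is to prove the single master equality $|D^*_X S(\bar{x}\mid\bar{u})|^+ = {\rm lip}_X S(\bar{x}\mid\bar{u})$, since formula (\ref{wanmei}) is then immediate and all three equivalences drop out with only bookkeeping. Write $H := D^*_X S(\bar{x}\mid\bar{u})$ and let $\beta$ denote the limsup on the right-hand side of (\ref{first-form}), so that Theorem \ref{theo-point-convex} already gives $\beta = {\rm lip}_X S(\bar{x}\mid\bar{u})$; the whole task thus reduces to showing $|H|^+ = \beta$.

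First I would record two structural facts about $H$. Because each $N_{\gph S|_X}(x,u)$ is a cone, each coderivative $D^*S|_X(x\mid u)$ is positively homogeneous; and because $X$ is convex, each $T_X(x)$ is a closed convex cone, so $x^*\mapsto {\rm proj}_{T_X(x)}(x^*)$ is positively homogeneous. Combining these, $H$ is positively homogeneous, and $\gph H$ is closed since $H$ is defined as an outer limit. I would then invoke the elementary fact that a positively homogeneous mapping with closed graph satisfies $|H|^+<+\infty$ if and only if $H(0)=\{0\}$: if $H(0)\ne\{0\}$ then a nonzero ray in $H(0)$ together with $0\in\B$ forces $|H|^+=+\infty$, while if $|H|^+=+\infty$ one normalizes a sequence $x_k\in H(u_k)$ with $u_k\in\B$ and $\|x_k\|\to+\infty$, passes to $x_k/\|x_k\|\in H(u_k/\|x_k\|)$, and uses closedness of $\gph H$ (with $u_k/\|x_k\|\to 0$) to land a unit vector in $H(0)$. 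This gives the equivalence of (b) and (c) outright.

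The heart of the argument is $|H|^+=\beta$, proved by two inequalities. For $|H|^+\le\beta$: given $u^*\in\B$ and $x^*\in H(u^*)$, take defining sequences $(x_k,u_k)\xrightarrow[]{\gph S|_X}(\bar x,\bar u)$ and $x_k^*\in D^*S|_X(x_k\mid u_k)(u_k^*)$ with $u_k^*\to u^*$ and ${\rm proj}_{T_X(x_k)}(x_k^*)\to x^*$; rescaling by $\lambda_k:=\max\{1,\|u_k^*\|\}\to 1$ (here positive homogeneity of both the coderivative and the cone-projection is used) I may assume $u_k^*\in\B$, whence each $\|{\rm proj}_{T_X(x_k)}(x_k^*)\|$ is dominated by the inner supremum defining $\beta$ at $(x_k,u_k)$, and letting $k\to+\infty$ yields $\|x^*\|\le\beta$. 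For $\beta\le|H|^+$: pick $(x_k,u_k)$ along which that inner supremum tends to $\beta$ and near-maximizers $u_k^*\in\B$, $x_k^*\in D^*S|_X(x_k\mid u_k)(u_k^*)$. If $\beta<+\infty$, both $\{u_k^*\}$ and $\{{\rm proj}_{T_X(x_k)}(x_k^*)\}$ are bounded, so a convergent subsequence produces $x^*\in H(u^*)$ with $\|u^*\|\le 1$ and $\|x^*\|=\beta$, giving $\beta\le|H|^+$; if $\beta=+\infty$, I instead normalize the divergent projections to extract a unit vector in $H(0)$, so $H(0)\ne\{0\}$ and hence $|H|^+=+\infty$ by the fact above. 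This establishes $|H|^+=\beta={\rm lip}_X S(\bar x\mid\bar u)$, i.e.\ (\ref{wanmei}).

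It remains to assemble the equivalences. The Lipschitz-like property (a) holds exactly when ${\rm lip}_X S(\bar x\mid\bar u)<+\infty$, the modulus being by definition the infimum of admissible constants and $\gph S$ being locally closed by hypothesis; the master equality converts this into $|H|^+<+\infty$, which is (c), and the equivalence of (b) and (c) was shown above, so (a), (b), (c) are mutually equivalent. I expect the main obstacle to be the careful bookkeeping inside $|H|^+=\beta$ — specifically the two rescalings, namely the $\lambda_k\to1$ normalization forcing $u_k^*\in\B$ in the $\le$ direction and the normalization of divergent projections in the $\beta=+\infty$ case of the $\ge$ direction, since these are precisely the places where positive homogeneity of the cone-projection and closedness of $\gph H$ are indispensable and where a careless argument collapses.
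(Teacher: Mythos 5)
Your proof is correct. Its first half coincides with the paper's: the paper likewise observes that $D^*_{X}S(\bar{x}\mid\bar{u})$ is outer semicontinuous and positively homogeneous and deduces (b) $\Longleftrightarrow$ (c) from \cite[Proposition 9.23]{roc}, which is exactly the elementary fact you prove by hand. For the remaining part --- (a) $\Longleftrightarrow$ (c) and formula (\ref{wanmei}) --- the paper gives no details at all; it only says the proof is ``in a similar way as in the proof of Theorem \ref{theo-point-convex}'', i.e.\ it envisages re-running the arguments based on Theorems \ref{theo-nece} and \ref{theo-suff} (Ekeland's principle included) with the outer norm in place of the uniform constant. You instead treat Theorem \ref{theo-point-convex} as a black box, so that everything reduces to the bridge identity $|D^*_{X}S(\bar{x}\mid\bar{u})|^+=\beta$, where $\beta$ is the limsup in (\ref{first-form}); you prove this identity by pure definition-chasing on the outer limit (positive homogeneity for the two rescalings, compactness of $\B$, closedness of the graph of an outer limit), handling the case $\beta=+\infty$ separately. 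This route is leaner: the variational machinery is invoked only once (inside Theorem \ref{theo-point-convex}), and it isolates the genuinely new content of the present theorem, namely the passage from the neighborhood-uniform quantity $\beta$ to the point-based outer norm. One small point to keep in mind: your chain of equalities uses (\ref{first-form}) also in the degenerate case where the Lipschitz-like property fails (both sides then equal $+\infty$); this unconditional reading is valid, and even under a conservative reading the failure of condition (b) of Theorem \ref{theo-point-convex} forces $\beta=+\infty$, so your argument closes either way. The two rescaling steps you flagged as delicate are exactly right as written.
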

\begin{proof} It is clear to see from  the definition of projectional coderivatives  that,  the mapping $D^*_{X}S(\bar{x}\mid\bar{u})$ is outer semicontinuous  and positively homogeneous. Then the equivalence of (b) and (c)  follows immediately from \cite[Proposition 9.23]{roc}. The equivalence of (a) and (c), and the formula for ${\rm lip}_X S(\bar{x}\mid\bar{u})$ can be proved in a similar way as  in the proof of Theorem \ref{theo-point-convex}. The detailed proof is omitted. This completes the proof. \end{proof}

The important role played by the projectional coderivative $D^*_{X}S(\bar{x}\mid\bar{u})$ in the study of the relative Lipschitz-like property,  is revealed by the generalized Mordukhovich criterion above.  In the following remarks, we list some simple facts on the projectional coderivatives.

\begin{remark}\label{rem-classical}
In the case of $\bar{x}\in \inte X $, we have for all $\bar{u}\in S(\bar{x})$,
\[
D^*_{X}S(\bar{x}\mid\bar{u})=D^*S(\bar{x}\mid\bar{u}),
\]
the results in Theorem  \ref{theo-convex-aubin} as well as in Theorem \ref{theo-point-convex}  recover the  Mordukhovich criterion for the `classical' Lipschitz-like property with no restriction on any set, see \cite[Theorem 9.40]{roc}.
\end{remark}

\begin{remark}[projectional coderivatives of smooth mappings with respect to sets with simple structures]\label{ex-smooth-hyperplane}
Consider  a smooth, single-valued mapping $F:\R^n\rightarrow \R^m$. By some coderivative calculus in \cite[Example 8.34 and Exercise 10.43]{roc}, we can obtain some formulas for the projectional coderivatives of $F$ with respect to sets having simple structures.  In the case of an affine set
 \[
 X:=\{x\in \R^n\mid Bx=b\},
 \]
 where $B$ is an $m\times n$ matrix and $b\in \R^m$, we have for all $\bar{x}\in \bdry X$,
\[
D^*_XF(\bar{x})(y)={\rm proj}_{{\rm ker}\, B}(\nabla F(\bar{x})^*y),
\]
where ${\rm ker}\, B:=\{x\in \R^n\mid Bx=0\}$.  While in the case of   a closed half-space \[X:=\{x\mid \langle a, x\rangle\leq \beta\},\]
we have for all $\bar{x}\in \bdry X$,
\[
D^*_XF(\bar{x})(y)=\left\{
\begin{array}{ll}
\left[\nabla F(\bar{x})^*y,\;\; {\rm proj}_{[a]^\perp}(\nabla F(\bar{x})^*y)\right] & \mbox{if}\;\langle \nabla F(\bar{x})^*y, a\rangle \leq 0,\\[0.2cm]
\left\{\nabla F(\bar{x})^*y,\;\; {\rm proj}_{[a]^\perp}(\nabla F(\bar{x})^*y)\right\} & \mbox{if}\;\langle \nabla F(\bar{x})^*y, a\rangle > 0.
\end{array}
\right.
\]
\end{remark}

\begin{remark}[projectional coderivative  of  the solution mapping of a linear system]\label{rem-3derivatives}
Consider the solution mapping
\begin{equation}\label{defofLS}
  S: p\mapsto \left\{x\in \R^n \mid  Ax + p \in K\right\}
\end{equation}
of a linear system, where $A\in \R^{m\times n}$ is a matrix, $p\in \R^m$ is some parameter and  $K\subset \R^m$ is a convex polyhedron.
 Clearly, we have
$\dom S=K+\rg A$, which is a convex polyhedron but not necessarily the whole space $\R^m$.    By \cite[ Exercises  6.7 and 6.44]{roc},   we have for any  $(p, x)\in \gph S$ or equivalently $Ax+p\in K$,
\[
   N_{\dom S}(p)=N_K(Ax + p) \cap \ker A^T\;\mbox{and}\; N_{\gph S}(p, x)=\left\{ (y, A^T y )\mid y\in N_K(Ax + p )\right\}.
\]
Let $(\bar{p}, \bar{x})\in \gph S$ and let $\mathcal{F}(\bar{p},\bar{x})$ be the  collection  of  the faces of $K$ that contain $A\bar{x}+\bar{p}$. Then we have \begin{equation}\label{tysds-gs}
\gph D^*_{\dom S}S(\bar{p}\mid \bar{x})\;=\bigcup_{F\in \mathcal{F}(\bar{p},\bar{x})}\left\{\left(-A^Ty,\;\;y-{\rm proj}_{N^F\cap \ker A^T}(y)\right)\mid y\in N^F\right\},
\end{equation}
where  $N^F:=N_K(Ax+p)$  for any $ F\in \mathcal{F}(\bar{p},\bar{x})$ and any choice of $(p, x)$ such that $Ax+p\in \ri F$.  To show (\ref{tysds-gs}), we rely on  the definition of the projectional coderivative by combining the following facts: (i)
$\mathcal{F}(\bar{p},\bar{x})$ consists of finitely many faces of $K$; (ii) for any  face $F$ of $K$, $T_K(Ax+b)$ and $N_K(Ax+b)$ are both constants whenever $Ax+p\in \ri F$;  (iii) for any sequence $(p_k, x_k)\to (\bar{p}, \bar{x})$ with $(p_k, x_k)\in \gph S$  for all $k$, there exists  some $F\in \mathcal{F}(\bar{p},\bar{x})$ such that, by taking a subsequence if necessary, $Ax_k+p_k\in \ri F$ for all $k$;   (iv) for any $F\in \mathcal{F}(\bar{p},\bar{x})$ and any $(p, x)$ with $Ax+p\in \ri F$,
\[
{\rm proj}_{T_{\dom S}(p)}(y)=y-{\rm proj}_{N_{\dom S}(p)}(y)=y-{\rm proj}_{N^F\cap \ker A^T}(y)\quad \forall y;
\]
and (v)  for any $F\in \mathcal{F}(\bar{p},\bar{x})$,
\[
\limsup_{\tiny (p, x)\xrightarrow[]{Ax+p\in \ri F}(\bar{p}, \bar{x})}
{\rm proj}_{T_{\dom S}(p)\times \R^n}N_{\gph S}(p,x)=\left\{\left(y-{\rm proj}_{N^F\cap \ker A^T}(y),\;\;A^Ty\right)\mid y\in N^F\right\}.
\]
In contrast,  by the definition of the coderivative,  we have
\[
\gph D^*S(\bar{p}\mid \bar{x})= \left\{\left(-A^Ty,\;\;y \right)\mid y\in N_K(A\bar{x}+\bar{p})\right\}.
\]
The classical Mordukhovich criterion $D^*S(\bar{p}\mid \bar{x})(0)=\{0\}$ amounts to
\[
N_K(A\bar{x}+\bar{p})\cap \ker A^T=\{0\}\Longleftrightarrow T_K(A\bar{x}+\bar{p})+\rg A=\R^m \Longleftrightarrow   \bar{p}\in \inte(\dom S).
\]
While the generalized Mordukhovich criterion $D^*_{\dom S}S(\bar{p}\mid \bar{x})(0)=\{0\}$ holds automatically  as it amounts to the following trivial equalities:
\[
y-{\rm proj}_{N^F\cap \ker A^T}(y)\equiv 0\quad \forall F\in \mathcal{F}(\bar{p},\bar{x}),\; \forall y\in N^F\cap \ker A^T.
\]
It is interesting to note that
\[
\gph D^*_{\dom S}S(\bar{p}\mid \bar{x})=\gph D^*S(\bar{p}\mid \bar{x})  \Longleftrightarrow \bar{p}\in \inte(\dom S),
\]
meaning that the projectional coderivative $D^*_{\dom S}S(\bar{p}\mid \bar{x})$ and the coderivative $D^*S(\bar{p}\mid \bar{x})$ differs from each other only when  $\bar{p}$  is on the boundary of   $\dom S$.
\end{remark}

  As suggested by one reviewer, we will compare our sufficient condition with the one established for an implicitly defined set-valued mapping in terms of a directional limiting coderivative in \cite[Theorem 3.5]{bgo19}.
To have a better comparison, we first present an explicit version of \cite[Theorem 3.5]{bgo19} as follows.

We start by recalling the definitions  of the directional limiting normal cone and the directional limiting coderivative. For a set  $\Omega\subset \R^n$ with $\bar{x}\in \Omega$  and a direction $u\in \R^n$, the directional limiting normal cone to $\Omega$ in direction $u$ at $\bar{x}$ is defined by
\[
N_\Omega(\bar{x};u):=\limsup_{t\downarrow 0, \;u'\to u}\widehat{N}_\Omega(\bar{x}+tu'),
\]
while for a set-valued mapping $S:\R^n\rightrightarrows  \R^m$ having locally closed graph around $(\bar{x}, \bar{y})\in \gph S$ and a pair of directions $(u, v)\in \R^n\times \R^m$, the set-valued mapping $D^*S((\bar{x},\bar{y});(u, v)):\R^m\rightrightarrows \R^n$, defined by
\[
D^*S((\bar{x},\bar{y});(u, v))(v^*):=\{u^*\in \R^n\mid (u^*, -v^*)\in N_{\gph S}((\bar{x},\bar{y});(u, v))\}\quad \forall v^*\in \R^m
\]
is called the directional limiting coderivative of $S$ in the direction $(u, v)$ at $(\bar{x}, \bar{y})$. See \cite{bgo19,gfr14} and references therein  for more details and some basic properties of these notions.

\begin{theorem}[{\cite[Theorem 3.5]{bgo19}} in an explicit form]\label{theo-fxsds}  Consider $S: \R^n\rightrightarrows  \R^m$, $\bar{x}\in X\subset \R^n$ and $\bar{u}\in S(\bar{x})$. Assume  that $\gph S$ is locally closed at $(\bar{x}, \bar{u})$ and that $X$ is closed. Further assume that the following conditions are satisfied:
\begin{description}
  \item[(i)]  For every $x \in T_X (\bar{x})$ and every sequence $t_k\downarrow 0$, there exists some $u\in \R^n$ such that
\[
\liminf_{k\to \infty}\frac{d((\bar{x}+t_kx,\bar{u}+t_ku), \gph S)}{t_k}=0.
\]
This holds in particular if, for every $x \in T_X (\bar{x})$, there is some $u\in \R^m$ such that $(x,u)\in T_{\gph S}(\bar{x}, \bar{u})$ is derivable (i.e., for every $t_k\downarrow 0$, there is some $(x_k, u_k)\to (x, u)$ such that $(\bar{x}, \bar{u})+t_k(x_k, u_k)\in \gph S$ for all $k$).
  \item[(ii)] The  equality
  \[
D^*S \left( (\bar{x},\bar{u}) ;(x, u) \right)(0) = \{0\}
\]
holds for all $x \in T_X(\bar{x})$ and $(x,u)\in T_{\gph S}(\bar{x}, \bar{u})$ with $(x, u)\not=(0, 0)$.
  \end{description}
Then $S$ has the Lipschitz-like property relative to $X$ at $\bar{x}$ for $\bar{u}$. 
\end{theorem}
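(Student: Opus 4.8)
The plan is to deduce the conclusion from the sufficiency result already available, namely Theorem \ref{theo-suff}, by showing that hypotheses (i) and (ii) force its pointwise condition to hold near $(\bar{x}, \bar{u})$. Concretely, I would prove that there exist a constant $\kappa \geq 0$ and a neighborhood of $(\bar{x}, \bar{u})$ on which
\[
\max_{w\in \cl\pos(X-x)\cap\Sph}\langle x^*, w\rangle \le \kappa\|u^*\|\qquad \forall\, x^*\in D^*S|_X(x\mid u)(u^*)
\]
holds for every $(x,u)\in\gph S|_X$ in that neighborhood. Theorem \ref{theo-suff}, whose standing hypotheses ($X$ closed and $\gph S$ locally closed) are exactly those assumed here, then yields the Lipschitz-like property relative to $X$. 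An equivalent self-contained route would re-run the Ekeland argument of Theorem \ref{theo-suff} directly, but reducing to that theorem is cleaner.

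The heart of the argument is a contradiction-plus-rescaling scheme establishing the displayed estimate. Suppose it fails for every $\kappa$; then I can select $\kappa_k\to+\infty$, points $(x_k,u_k)\xrightarrow{\gph S|_X}(\bar{x},\bar{u})$, covectors $x_k^*\in D^*S|_X(x_k\mid u_k)(u_k^*)$, i.e. $(x_k^*,-u_k^*)\in N_{\gph S|_X}(x_k,u_k)$, and unit vectors $w_k\in\cl\pos(X-x_k)\cap\Sph$ with $\langle x_k^*,w_k\rangle>\kappa_k\|u_k^*\|$. Normalizing the cone element $(x_k^*,u_k^*)$ to unit length (legitimate since the coderivative graph is a cone and the inequality is positively homogeneous), the inequality forces $\|u_k^*\|\le 1/\kappa_k\to 0$ while $\|x_k^*\|\to 1$, so along a subsequence $x_k^*\to x^*$ with $\|x^*\|=1$ and $u_k^*\to 0$. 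I would then split off the constraint contribution: via the intersection rule for limiting normal cones (cf. \cite{roc}) applied to $\gph S|_X=\gph S\cap(X\times\R^m)$, write $(x_k^*,-u_k^*)=(a_k^*,-u_k^*)+(b_k^*,0)$ with $(a_k^*,-u_k^*)\in N_{\gph S}(x_k,u_k)$ and $b_k^*\in N_X(x_k)$, so that $a_k^*\in D^*S(x_k\mid u_k)(u_k^*)$.

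Next I would extract a critical direction. Setting $\tau_k:=\|(x_k,u_k)-(\bar{x},\bar{u})\|$ and $(\xi_k,\eta_k):=\tau_k^{-1}((x_k,u_k)-(\bar{x},\bar{u}))$, a subsequence converges to some $(\xi,\eta)$ with $\|(\xi,\eta)\|=1$; since $(x_k,u_k)\in\gph S$ and $x_k\in X$, we obtain $(\xi,\eta)\in T_{\gph S}(\bar{x},\bar{u})$ and $\xi\in T_X(\bar{x})$. By the very definition of the directional limiting normal cone, the suitably renormalized normals $(a_k^*,-u_k^*)$ accumulate at a nonzero pair $(a^*,0)\in N_{\gph S}((\bar{x},\bar{u});(\xi,\eta))$, that is $a^*\in D^*S((\bar{x},\bar{u});(\xi,\eta))(0)$ with $a^*\ne 0$, using $u_k^*\to 0$ to annihilate the second component. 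With $(\xi,\eta)\ne(0,0)$, $\xi\in T_X(\bar{x})$ and $(\xi,\eta)\in T_{\gph S}(\bar{x},\bar{u})$, this directly contradicts condition (ii). Condition (i) enters to supply the non-degeneracy that legitimizes the scheme: it ensures $S$ has nonempty values along the tangent directions of $X$ (so the relative inclusion is not vacuously forced) and that each critical $\xi\in T_X(\bar{x})$ is carried by a genuine graph direction $(\xi,\eta)\in T_{\gph S}(\bar{x},\bar{u})$, which is precisely the set of directions over which (ii) is posed.

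The main obstacle I anticipate is the directional extraction in the third step: one must simultaneously (a) rescale so that a nonzero limiting direction $(\xi,\eta)$ survives, (b) renormalize the normals so that their limit is a nonzero element of the directional limiting normal cone rather than collapsing to zero, and (c) control the split $x_k^*=a_k^*+b_k^*$ so that the constraint part $b_k^*\in N_X(x_k)$ does not absorb all the mass, i.e. so that the limiting intersection qualification is not violated. Reconciling these three normalizations, and checking that the separation into $N_{\gph S}$ and $N_X$ passes correctly to the directional limit, is where the genuine bookkeeping lies; conditions (i) and (ii) appear to be calibrated precisely to make all three requirements compatible.
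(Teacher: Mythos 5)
Your plan has a genuine gap, and it is not repairable: the intermediate statement you set out to prove is false. Conditions (i) and (ii) do \emph{not} imply the hypothesis of Theorem \ref{theo-suff} when $X$ is merely closed, so no contradiction scheme can establish the estimate you are after. Concretely, take $n=m=1$, $S(x)=\{x\}$ (the identity mapping, so $\gph S$ is a line), $X=\{0\}\cup\{1/k \mid k\in \N\}$, $\bar{x}=0$, $\bar{u}=0$. Here $T_X(0)=\R_+$, every direction $(z,z)$ with $z\geq 0$ is a derivable tangent to $\gph S$, and $N_{\gph S}\left((0,0);(z,z)\right)=\{(v,-v)\mid v\in \R\}$, so $D^*S\left((0,0);(z,z)\right)(0)=\{0\}$; thus (i) and (ii) hold, and $S$ trivially has the Lipschitz-like property relative to $X$. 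Yet each $1/k$ is an isolated point of $X$, so $(1/k,1/k)$ is an isolated point of $\gph S|_X$, whence $N_{\gph S|_X}(1/k,1/k)=\R\times\R$ and $D^*S|_X(1/k\mid 1/k)(0)=\R$; moreover $X-1/k$ contains both positive and negative numbers, so $\cl\pos(X-1/k)=\R$. Taking $u^*=0$ and $x^*=1$ violates condition (\ref{key-youjie000}) at every such point, i.e., arbitrarily close to $(\bar{x},\bar{u})$. So the hypothesis of Theorem \ref{theo-suff} fails while (i), (ii) and the conclusion all hold.

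The collapse happens exactly at the obstacle (c) you flagged. In the example, the decomposition $(x_k^*,-u_k^*)=(a_k^*,-u_k^*)+(b_k^*,0)$ forces $a_k^*=0$ and $b_k^*=x_k^*\in N_X(1/k)=\R$: the constraint part absorbs all the mass, and (i)--(ii) give no control over $N_X$ at nearby points, since they only involve $T_X(\bar{x})$ and directional normals to $\gph S$ at $(\bar{x},\bar{u})$ themselves. (Separately, the intersection rule you invoke to produce the split requires a qualification condition, $N_{\gph S}(x_k,u_k)\cap \left(-N_X(x_k)\times\{0\}\right)=\{0\}$, which the hypotheses also do not supply.) The paper's proof avoids Theorem \ref{theo-suff} entirely: it defines $M(x,u):=S(x)-u$, checks that local closedness and conditions (i), (ii) are exactly the translations of the assumptions of \cite[Theorem 3.5]{bgo19} for the implicitly defined system $S(x)=\{u\mid 0\in M(x,u)\}$, and quotes that theorem directly. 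Note also Remark \ref{rem-xxbj}: the paper explicitly contrasts the two sufficient conditions, observing that Theorem \ref{theo-suff} integrates $S$ and $X$ through $S|_X$ while Theorem \ref{theo-fxsds} treats them as separated variables; your proposal implicitly assumes the former condition is the weaker one, and the example above shows it is not.
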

\begin{proof} Clearly, we have $S(x)=\{u\in \R^m\mid 0\in M(x, u)\}$ for all $x\in \R^n$, where $M(x,u):=S(x)-u$. It is clear to see that  $\gph S$ is locally closed at $(\bar{x}, \bar{u})$ if and only if  $\gph M$ is locally closed at $(\bar{x}, \bar{u}, 0)$, and that condition (i) holds if and only if, for every $x \in T_X (\bar{x})$ and every sequence $t_k\downarrow 0$, there exists some $u\in \R^m$ such that
\[
\liminf_{k\to \infty}\frac{d((\bar{x}+t_kx,\bar{u}+t_ku, 0), \gph M)}{t_k}=0.
\]
Moreover, by definition, we have the following equivalences:
\[
(x,u,0)\in T_{\gph M}(\bar{x}, \bar{u}, 0) \Longleftrightarrow (x,u)\in T_{\gph S}(\bar{x}, \bar{u}),
\]
and
\[
(x^*, 0)\in D^*M((\bar{x}, \bar{u}, 0);(x, u, 0))(y^*)\Longleftrightarrow y^*=0\;\mbox{and}\; x^*\in D^*S((\bar{x}, \bar{u});(x, u))(0),
\]
from which, it follows that   condition (ii) holds if and only if, for every nonzero $(x, u) \in T_X(\bar{x})\times \R^m$ with $(x,u,0)\in T_{\gph M}(\bar{x}, \bar{u}, 0)$,   $(x^*, 0)$ belongs to  $D^*M((\bar{x}, \bar{u}, 0);(x, u, 0))(y^*)$ only if $x^*=0$ and $y^*=0$. Therefore, \cite[Theorem 3.5]{bgo19} can be applied in a direct way to obtain the result. \end{proof}

\begin{remark}\label{rem-xxbj} In the case of $X$ being merely closed,  two sufficient conditions are provided, respectively,  in Theorems \ref{theo-suff} and \ref{theo-fxsds} for the Lipschitz-like property of  $S$ relative to $X$ at $\bar{x}$ for $\bar{u}$.    Unlike Theorem \ref{theo-suff}, which utilizes  integrated information behind the coderivative of the restriction  mapping $S|_X$ (combining the local behavior of $S$ around $(\bar{x}, \bar{u})$ and  of $X$ around $\bar{x}$ as a whole),  Theorem \ref{theo-fxsds} treats $S$ and $X$ as `separated variables' (the local behavior of $S$ around $(\bar{x}, \bar{u})$ described by the directional limiting coderivative $D^*S \left( (\bar{x},\bar{u}) ;(\cdot, \cdot) \right)$ is independent of the local behavior of  $X$ around $\bar{x}$).  So it would be  the case that the sufficient condition in Theorem \ref{theo-fxsds} is easier to be verified than that in  Theorem \ref{theo-suff}. However, in the case of $X$ being not only closed but also convex,  the sufficient condition in Theorem \ref{theo-suff} turns out to be also necessary as can be seen from Theorems \ref{theo-point-convex} and \ref{theo-convex-aubin},   but the sufficient condition  in Theorem \ref{theo-fxsds} is  far from being necessary as will be seen from Example \ref{exam-lcp} below.
\end{remark}

To end this section, we demonstrate by an interesting example how our results in Theorems \ref{theo-point-convex} and \ref{theo-convex-aubin} can be applied in the circumstance  that the graph  can be decomposed into finitely many simple pieces. Moreover, by this example,  we also demonstrate how Theorem \ref{theo-fxsds} can   fail in identifying the relative  Lipschitz-like property.
\begin{example}\label{exam-lcp}
Consider the solution mapping $S:\R^2\rightrightarrows \R^2$ of  a linear complementarity system:
\begin{equation} \label{LCPexample}
S(q) := \left\{ x\in \R^2\mid x \geq 0,\; Mx + q \geq 0,\; \langle x, \, Mx+q \rangle =0 \right\},
\end{equation}
where
\[
 M = \left[\begin{matrix}
                -1 & 0 \\
                1 & 1
              \end{matrix}
              \right].
\]
Clearly, we have
\[
\dom S=\R_+\times \R\quad\mbox{and}\quad\gph S=\left\{ (q, x)\in \R^2\times \R^2\mid x \geq 0,\; Mx + q \geq 0,\; \langle x, \, Mx+q \rangle =0 \right\}.
\]
In terms of
\[
\mathcal{I}:=\{(I_1, I_2, I_3)\mid  I_1\cup I_2\cup I_3=\{1, 2\},\;I_i\cap I_j=\emptyset\;\forall i\not=j\}
\]
and
\[
(\gph S)_{(I_1, I_2, I_3)}:=\left\{
(q, x)\in \R^2\times \R^2\left|  \begin{array}{lll}
  x_i=0 & (Mx+q)_i>0 &\mbox{if}\; i\in I_1 \\[0.2cm]
  x_i>0 & (Mx+q)_i=0 &\mbox{if}\; i\in I_2\\[0.2cm]
  x_i=0 & (Mx+q)_i=0 &\mbox{if}\; i\in I_3
\end{array}
\right.
\right\}\; \forall (I_1, I_2, I_3)\in \mathcal{I},
\]
we have
\[
\gph S=\displaystyle\bigcup_{(I_1, I_2, I_3)\in \mathcal{I}}(\gph S)_{(I_1, I_2, I_3)}.
\]
Note that $(\gph S)_{(I_1, I_2, I_3)}\not=\emptyset$ for all $(I_1, I_2, I_3)\in \mathcal{I}$ and that
\[
(\gph S)_{(I_1, I_2, I_3)}\cap (\gph S)_{(I'_1, I'_2, I'_3)}=\emptyset\; \forall (I_1, I_2, I_3)\not=(I'_1, I'_2, I'_3).
\]
Then for every  $(q, x)\in \gph S$, there is a unique $(I_1, I_2, I_3)\in \mathcal{I}$ such that $(q, x)\in (\gph S)_{(I_1, I_2, I_3)}$ and
\begin{equation}\label{ydys}
N_{\gph S}(q, x)=\left\{(u^*,\;M^Tu^*+v^*)\left|
\begin{array}{ll}
  (u^*_i,v^*_i)\in \{0\}\times \R &\mbox{if}\; i\in I_1 \\[0.2cm]
(u^*_i,v^*_i)\in \R\times \{0\}&\mbox{if}\; i\in I_2\\[0.2cm]
  (u^*_i,v^*_i)\in  \Omega&\mbox{if}\; i\in I_3
\end{array}
\right.
\right\},
\end{equation}
where $\Omega:=(\R\times \{0\}) \cup (\{0\} \times \R) \cup \R^2_-$.

Consider in particular $(\bar{q}, \bar{x})=(0, 0)\in \gph S$. We have $(\bar{q}, \bar{x})\in (\gph S)_{(\emptyset, \emptyset, \{1, 2\})}$  and hence
\[
N_{\gph S}(\bar{q}, \bar{x})=\left\{(u^*,\;M^Tu^*+v^*)\left|
\begin{array}{ll}
   (u^*_i,v^*_i)\in \Omega&\forall i=1,2
\end{array}
\right.
\right\}.
\]
This implies by definition that
\[
D^*S(\bar{q}\mid \bar{x})(0)=\R_-\times \{0\}\not=\{(0, 0)\}.
\]
So by the  Mordukhovich criterion   \cite[Theorem 9.40]{roc}, we assert that $S$ does not have the   Lipschitz-like property at $\bar{q}$ for $\bar{x}$, which can also be seen from the fact that $\bar{q}\in \bdry \dom S$.

In what follows, we will apply Theorem \ref{theo-point-convex} to study the Lipschitz-like property of $S$ relative to $\dom S$ at $\bar{q}$ for $\bar{x}$, which amounts to the existence of some $\kappa \geq 0$ such that
      \begin{equation}\label{key-ineq1111}
\|{\rm proj}_{T_{\dom S}(q)}(u^*)\| \leq \kappa \|M^Tu^*+v^*\|\quad\forall  (u^*,\;M^Tu^*+v^*)\in N_{\gph S}(q, x)
\end{equation}
holds for all $(q, x)$ close to $(\bar{q}, \bar{x})$ in $\gph S$, or in other words, for all $(q, x)$ close to  $(\bar{q}, \bar{x})$ in all
$(\gph S)_{(I_1, I_2, I_3)}$    with $(I_1, I_2, I_3)\in \mathcal{I}$. For each $(I_1, I_2, I_3)\in \mathcal{I}$, we define
\[
\kappa(I_1, I_2, I_3):=\inf\left\{\kappa\geq 0 \mid \mbox{(\ref{key-ineq1111}) holds for all}\; (q, x)\in (\gph S)_{(I_1, I_2, I_3)} \right\}.
\]
Then by some direct calculation, we have
\begin{equation}\label{kap}
\kappa(I_1, I_2, I_3)=\left\{
\begin{array}{ll}
  0 & \mbox{if}\; (I_1, I_2, I_3)=(\{1, 2\}, \emptyset, \emptyset)\\[0.2cm]
1 &\mbox{if}\; (I_1, I_2, I_3)\in \{(\{2\},\emptyset, \{1\}), (\{1\},\{2\}, \emptyset), (\{1\}, \emptyset, \{2\}), (\{2\},\{1\}, \emptyset)\} \\[0.2cm]
        \sqrt{\frac{3+\sqrt{5}}{2}} & \mbox{otherwise}.
\end{array}
\right.
\end{equation}
For instance, whenever  $(I_1, I_2, I_3)=(\{2\},\emptyset, \{1\})$, we have for all $(q, x)\in (\gph S)_{(I_1, I_2, I_3)}=\{(q, x)\mid q_1=0,\; q_2>0,\; x_1=x_2=0\}$,
 $$N_{\gph S} (q, x) = \left\{\left( \left(\begin{matrix}
                                            u^*_1 \\
                                            0
                                          \end{matrix} \right),
                                          \left( \begin{matrix}
                                            -u^*_1 +v^*_1\\
                                             v^*_2
                                          \end{matrix} \right)  \right) \ \Bigg|  \ v^*_2 \in \R ,(u^*_1, v^*_1 ) \in \Omega \right\},$$
     $${\rm proj}_{T_{\dom S}(q)\times \R^2}N_{\gph S} (q, x) = \left\{\left( \left(\begin{matrix}
                                            \max\{u^*_1, 0\} \\
                                            0
                                          \end{matrix} \right),
                                          \left( \begin{matrix}
                                            -u^*_1 +v^*_1\\
                                             v^*_2
                                          \end{matrix} \right)  \right) \ \Bigg|  \ v^*_2 \in \R ,(u^*_1, v^*_1 ) \in \Omega \right\},$$
and
\[
\kappa(I_1, I_2, I_3)=1.
\]
For another instance, whenever  $(I_1, I_2, I_3)=(\emptyset, \{1, 2\},\emptyset)$, we have for all $(q, x)\in (\gph S)_{(I_1, I_2, I_3)}=\{(q, x)\mid q_1>0,\; q_2<-q_1,\; x_1=q_1,\; x_2 = -q_1 - q_2\}$,
 $${\rm proj}_{T_{\dom S}(q)\times \R^2}N_{\gph S} (q, x) = N_{\gph S} (q, x) = \left\{\left(  u^*, M^Tu^*   \right) \mid  u^*\in \R^2 \right\},$$
and
\[
\kappa(I_1, I_2, I_3)=\frac{1}{\min_{y\in \Sph } \|M^T y\| }= \sqrt{\frac{3+\sqrt{5}}{2}}.
\]
In view of (\ref{kap}), we get from Theorem \ref{theo-point-convex} that
  \[
     {\rm lip}_{\dom S} S(\bar{q}\mid\bar{x})=\max\{\kappa(I_1, I_2, I_3)\mid (I_1, I_2, I_3)\in \mathcal{I}\}= \sqrt{\frac{3+\sqrt{5}}{2}},
 \]
 and hence that $S$ does have the Lipschitz-like property  relative to $\dom S$ at $\bar{q}$ for $\bar{x}$. In contrast,  we can also apply Theorem \ref{theo-convex-aubin} to obtain the same result by calculating the projectional coderivative $D^*_{\dom S}S(\bar{q}\mid \bar{x})$, which can be done in the same way by decomposing $\gph S$ into finitely many pieces $(\gph S)_{(I_1, I_2, I_3)}$. The details are complicated and thus omitted.

Let $Q$ be a closed subset of $\dom S$ such that $\bar{q}\in Q$ and $q:=(0, -1)^T\in T_Q(\bar{q})$ (e.g.,  $\dom S$ could be the largest instance of $Q$). Our argument above suggests that $S$ has the Lipschitz-like property  relative to $Q$ at $\bar{q}$ for $\bar{x}$, which, however, cannot be verified via Theorem \ref{theo-fxsds}. To this end, we argue that  the condition $D^*S((\bar{q},\bar{x}); (q,x))(0)=\{0\}$ cannot be fulfilled when $x:=(0, 1)^T$.  By definition, we have in terms of $(I_1, I_2, I_3):=(\emptyset, \{2\}, \{1\})$,
\[
(q, x)\in (\gph S)_{(I_1, I_2, I_3)}\subset \gph S.
\]
 In view of the facts that $\gph S$ is the union of finitely many convex polyhedral cones and that $(\bar{q}, \bar{x})=(0, 0)$, we have $(q, x)\in T_{\gph S}(\bar{q}, \bar{x})=\gph S$  and
\[
\begin{array}{ll}
 N_{\gph S} \left( (\bar{q},\bar{x}); (q,x) \right)&:=\limsup_{t\downarrow 0, \ (q',x') \rightarrow (q,x)} N_{\gph S} \left(   t (q',x') \right)\\[0.5cm]
&=\limsup_{  (q',x') \rightarrow (q,x)} N_{\gph S} \left(    q',x'  \right)\\[0.5cm]
&=  N_{\gph S} \left(    q,x  \right)\\[0.5cm]
&= \left\{(u^*,\;M^T u^*+v^*) \left|
  (u^*_1,v^*_1)\in \Omega,\;
(u^*_2,v^*_2)\in \R\times \{0\}
\right.
\right\},
\end{array}
\]
where the    second equality follows from the fact that $\gph S$ is a closed cone, the third one   from the outer semi-continuity  of the normal cone mappings, and the last one  from (\ref{ydys}). Then  we have  by definition,
\[
D^*S((\bar{q},\bar{x}); (q,x) ) (0) = \R_-\times \{0\}\not=\{(0, 0)^T\},
\]
suggesting that the Lipschitz-like property of $S$  relative to $Q$ at $\bar{q}$ for $\bar{x}$ cannot be derived from Theorem \ref{theo-fxsds}.
\end{example}

\section{Profile mappings and relative Lipschitzian continuity}\label{sec-prof-map}
Consider a function $f:\R^n\rightarrow \overline{\R}$, a point $\bar{x}\in \R^n$  where $f$ is finite and locally lsc, and a set $X\subset \dom f$ such that  $\bar{x}\in X$.
The notion of  relative  Lipschitzian continuity of $f$ is standard, see \cite[Definition 9.1 (b)]{roc} for a formal definition.  To say that  $f$ is locally Lipschitz continuous at $\bar{x}$ relative to $X$ is to assert the following inequality:
\[
{\lip}_Xf(\bar{x}):=\displaystyle\limsup_{\tiny\begin{array}{c}
  x,x'\xrightarrow[]{X} \bar{x} \\
 x\not=x'
\end{array}}\frac{|f(x)-f(x')|}{\|x-x'\|}<+\infty.
\]
It is straightforward  to verify that  $f$ is locally Lipschitz continuous at $\bar{x}$ relative to $X$ if and only if the profile mapping
 \[
 E_f: x\mapsto \{\alpha\in \R\mid \alpha\geq f(x)\}
 \]
  has the Lipschitz-like property relative to $X$ at $\bar{x}$ for $f(\bar{x})$, and furthermore that their moduli are equal:
   \begin{equation}\label{modulus-equal}
{\lip}_Xf(\bar{x})={\lip}_X E_f(\bar{x}\mid f(\bar{x})).
 \end{equation}

  In what follows, we will study the relative Lipschitzian continuity by applying Theorem \ref{theo-point-convex} to the profile mapping $E_f$, and will  give subgradient characterizations for the relative Lipschitzian continuity.

  To begin, we present a useful property of the proximal normals to epigraphs.
\begin{lemma}[Proximal normals to epigraphs]\label{lem-pro-epi}
For a function $f:\R^n\rightarrow \overline{\R}$ and a point $(x, \alpha)$ with $\alpha>f(x)>-\infty$, we have
\[
N^{\rm prox}_{\epi f}(x, \alpha)\subset N^{\rm prox}_{\epi f}(x, f(x))\cap (\R^n\times\{0\}).
\]
\end{lemma}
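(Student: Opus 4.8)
The plan is to work directly from the supporting-ball description of the proximal normal cone and to exploit the vertical monotonicity of an epigraph. Recall that $(v,\beta)\in N^{\rm prox}_{\epi f}(x,\alpha)$ means there is some $t>0$ with $(x,\alpha)\in {\rm proj}_{\epi f}\big((x,\alpha)+t(v,\beta)\big)$; equivalently, writing $c:=(x,\alpha)+t(v,\beta)$ and $r:=t\|(v,\beta)\|$, the open ball $B^\circ$ of radius $r$ centered at $c$ satisfies $B^\circ\cap\epi f=\emptyset$ while $(x,\alpha)$ lies on its boundary. I would fix an arbitrary $(v,\beta)$ in the left-hand side, let $t$, $c$, $r$ be as above, and then show first that $\beta=0$ and second that $(v,0)$ is a proximal normal at the lower point $(x,f(x))$.

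To see $\beta=0$ I would use the inequality form of the supporting-ball condition, namely $\langle (v,\beta),(y,\gamma)-(x,\alpha)\rangle\le \tfrac{1}{2t}\,\|(y,\gamma)-(x,\alpha)\|^2$ for all $(y,\gamma)\in\epi f$, which follows by squaring the projection inequality. Since $\alpha>f(x)$, the vertical points $(x,\alpha+s)$ belong to $\epi f$ for all $s\ge f(x)-\alpha$, an interval containing $0$ in its interior and hence values of both signs. Substituting $y=x$, $\gamma=\alpha+s$ gives $\beta s\le \tfrac{1}{2t}s^2$; dividing by $s$ and letting $s\downarrow0$ yields $\beta\le0$, while letting $s\uparrow0$ yields $\beta\ge0$, so $\beta=0$. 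Geometrically this is just the statement that a ball supporting $\epi f$ at an interior point of the vertical ray $\{x\}\times[f(x),\infty)$ must have a horizontal outward normal.

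The key step is a vertical translation of the supporting ball. Set $d:=\alpha-f(x)>0$, let $c':=c-(0,d)=(x+tv,f(x))$, and let ${B'}^\circ$ be the open ball of radius $r$ about $c'$. I claim ${B'}^\circ\cap\epi f=\emptyset$: if $(y,\gamma)\in {B'}^\circ$, then because $\beta=0$ the upward shift $(y,\gamma+d)$ has exactly the same distance to $c$ as $(y,\gamma)$ has to $c'$, so $(y,\gamma+d)\in B^\circ$ and hence $(y,\gamma+d)\notin\epi f$, giving $\gamma+d<f(y)$; as $d>0$ this forces $\gamma<f(y)$, i.e. $(y,\gamma)\notin\epi f$. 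The upward closedness (monotonicity) of the epigraph is precisely what lets the downward translation preserve emptiness. Since $(x,f(x))\in\epi f$ lies on $\partial {B'}$ at distance $r=t\|v\|$ from $c'$ while every point of $\epi f$ is at distance at least $r$, we conclude $(x,f(x))\in {\rm proj}_{\epi f}\big((x,f(x))+t(v,0)\big)$, that is $(v,0)\in N^{\rm prox}_{\epi f}(x,f(x))$; together with $\beta=0$ this gives the inclusion.

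I expect the main obstacle to be the careful bookkeeping of the translation: one must verify that with $\beta=0$ the map $(y,\gamma)\mapsto(y,\gamma+d)$ carries ${B'}^\circ$ into $B^\circ$, and then invoke upward closedness in the extended-valued sense, covering also the case $y\notin\dom f$ where $f(y)=+\infty$. A pleasant feature worth noting is that no lower semicontinuity or local closedness of $\epi f$ is required, since the projection point at $(x,f(x))$ is exhibited explicitly rather than obtained from an existence argument.
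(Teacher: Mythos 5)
Your proof is correct and follows essentially the same route as the paper's: both arguments rest on translating the supporting ball vertically by $d=\alpha-f(x)$ (using the upward closedness of $\epi f$) and on the two-sided vertical freedom at $(x,\alpha)$ afforded by $\alpha>f(x)$ to force the vertical component of the normal to vanish. The only differences are cosmetic ordering and technique: you establish $\beta=0$ first via the quadratic-inequality form of proximal normality, whereas the paper performs the translation first for a general normal $(v,\lambda)$ (concluding $(v,\lambda)\in N^{\rm prox}_{\epi f}(x,f(x))$ directly) and then shows $\lambda=0$ geometrically by exhibiting a point of $\epi f$ strictly inside the supporting ball.
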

\begin{proof} Let $(v, \lambda)\in N^{\rm prox}_{\epi f}(x, \alpha)$ with $\|(v,\lambda)\|=1$. By definition, there exists  some $\delta>0$   such that
\begin{equation}\label{pro-def}
\B_\delta((x, \alpha)+\delta(v,\lambda))\cap \epi f=\{(x, \alpha)\},
\end{equation}
implying that
   \[
   (x, f(x))\in \B_\delta((x, f(x))+\delta(v,\lambda))\cap \epi f.
    \]
    Let  $(\tilde{x}, \tilde{\alpha})\in \B_\delta((x, f(x))+\delta(v,\lambda))\cap \epi f$.  Then we have
\[
(\tilde{x}, \tilde{\alpha}+\alpha-f(x))\in \B_\delta((x, \alpha)+\delta(v,\lambda))\cap \epi f,
\]
and hence by (\ref{pro-def}),
\[
(\tilde{x}, \tilde{\alpha}+\alpha-f(x))=(x, \alpha)\Longleftrightarrow (\tilde{x}, \tilde{\alpha})=(x, f(x)).
\]
The latter equation implies that
\[
 \B_\delta((x, f(x))+\delta(v,\lambda))\cap \epi f=\{(x, f(x))\}.
\]
By definition, we have $(v, \lambda)\in N^{\rm prox}_{\epi f}(x, f(x))$, implying that  $\lambda\leq 0$. We claim that $\lambda=0$, for otherwise there is some $\varepsilon>0$ such that $(x, \alpha+\varepsilon\delta\lambda)\in \B_\delta((x, \alpha)+\delta(v,\lambda))\cap \epi f$ but $(x, \alpha)\not=(x, \alpha+\varepsilon\delta\lambda)$, a contradiction to (\ref{pro-def}).
This completes the proof.
\end{proof}

The classical local Lipschitzian continuity of $f$ at $\bar{x}$ (without  mention  of $X$) has been fully characterized by virtue of the (horizon) subgradients of $f$ at $\bar{x}$ in \cite[Theorem 9.13]{roc}, which says that $f$ is locally Lipschitz continuous at $\bar{x}$ if and only if $\partial ^\infty f(\bar{x})=\{0\}$ or equivalently $\partial f(x)$ is locally bounded at $\bar{x}$, and in that case,
\[
{\lip} f(\bar{x})=\max_{v\in \partial f(\bar{x})}\|v\|.
 \]
 In parallel fashion  we can characterize the local  Lipschitzian continuity of $f$ at $\bar{x}$ relative to $X$ by utilizing the following notion of projectional (horizon) subgradients, whose construction are motivated by applying Theorem \ref{theo-point-convex} to the profile mapping $E_f$.

\begin{definition}[projectional subgradients]\label{def-proj_subg} Consider a function $f:\R^n\rightarrow\overline{\R}$, a point $\bar{x}$ with $f(\bar{x})$ finite, and a convex set $X$ with $\bar{x}\in  X$. For a vector $v\in \R^n$, we say that
 \begin{description}
   \item[(a)] $v$ is a   projectional subgradient of $f$ at $\bar{x}$ with respect to $X$, written $v\in \partial_X f(\bar{x})$, if there are sequences $\tiny x_k\xrightarrow[]{\;f+\delta_X\;} \bar{x}$  and $v_k\in \partial (f+\delta_X)(x_k)$ with ${\rm proj}_{T_X(x_k)}(v_k)\to v$;
   \item[(b)]  $v$ is a  horizon  projectional subgradient of $f$ at $\bar{x}$ with respect to $X$, written $v\in \partial^\infty_X f(\bar{x})$, if there are sequences $\lambda_k\downarrow  0$,  $\tiny x_k\xrightarrow[]{\;f+\delta_X\;} \bar{x}$  and
$v_k\in \partial (f+\delta_X)(x_k)$ with $\lambda_k{\rm proj}_{T_X(x_k)}(v_k)\to v$.
  \end{description}
Here, $\tiny x_k\xrightarrow[]{\;f+\delta_X\;} \bar{x}$ amounts to  $x_k\to \bar{x}$ with $f(x_k)\to f(\bar{x})$ and $x_k\in X$ for all $k$. In the case that  $x\not\in
  X$, we define $\partial_X f(x):=\emptyset$ and $\partial^\infty_X f(x):=\emptyset$.
\end{definition}

By using the notion of  projectional (horizon) subgradients, we can extend \cite[Theorem 9.13]{roc} to
 deal with the   Lipschitzian continuity  of a function relative to some closed and convex set, and also the Lipschitz modulus.

\begin{theorem}\label{the-locally-lip-X}
Consider a function $f:\R^n\rightarrow\overline{\R}$, a point $\bar{x}\in \R^n$  where $f$ is finite and locally lsc, and a closed and convex set  $X\subset \dom f$ such that  $\bar{x}\in X$.  Then  the following conditions  are equivalent:
\begin{description}
  \item[(a)]  $f$ is locally Lipschitz continuous at $\bar{x}$ relative to $X$.
  \item[(b)] The mapping $x\mapsto {\rm proj}_{T_X(x)}  \partial (f+\delta_X)(x) $ is locally bounded at $\bar{x}$.
  \item[(c)] The mapping $x\mapsto   \partial_X f(x) $ is locally bounded at $\bar{x}$.
        \item[(d)]  $\partial_X^\infty f(\bar{x})=\{0\}$.
 \end{description}
Moreover, when these conditions hold,   the following properties hold:
\begin{description}
         \item[(i)] The inclusion
       \begin{equation}\label{kongzhi-inclusion}
       \partial^\infty (f+\delta_X)(x)\subset N_X(x)
       \end{equation}
        holds for all $x$ close enough to $ \bar{x} $ in $X$.
        \item[(ii)] The projectional coderivative of  $E_f$ at $\bar{x}$ for $f(\bar{x})$ with respect to $X$ is given by
         \begin{equation}\label{proj-prof-co-der}
D^*_X E_f\left(\bar{x}\mid f(\bar{x})\right)(\lambda)=\left\{
\begin{array}{ll}
\lambda\partial_Xf(\bar{x}) &\mbox{if}\;\lambda>0,\\[0.2cm]
\partial_X^\infty f(\bar{x}) &\mbox{if}\;\lambda=0,\\[0.2cm]
\emptyset &\mbox{if}\;\lambda<0.
\end{array}
\right.
\end{equation}
\item[(iii)] $\partial_X  f(\bar{x})$ is nonempty and compact with
\begin{equation}\label{lpsz-mo}
       {\lip}_X f(\bar{x})= \max_{v\in \partial_X  f(\bar{x})}\|v\|.
       \end{equation}
\end{description}
\end{theorem}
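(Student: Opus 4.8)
The plan is to route everything through the profile mapping $E_f$ and the generalized Mordukhovich criterion of Theorem \ref{theo-convex-aubin}. First I would record the elementary identity $\gph E_f|_X=\epi(f+\delta_X)$, so that $E_f|_X=E_{f+\delta_X}$ and the normals $N_{\gph E_f|_X}(x,\alpha)$ are exactly the epigraphical normals of $f+\delta_X$. Combined with the already-noted equivalence ``$f$ is locally Lipschitz relative to $X$ at $\bar{x}$ iff $E_f$ has the Lipschitz-like property relative to $X$ at $\bar{x}$ for $f(\bar{x})$'' together with (\ref{modulus-equal}), Theorem \ref{theo-convex-aubin} applied to $S=E_f$ yields at once that (a) is equivalent to $D^*_X E_f(\bar{x}\mid f(\bar{x}))(0)=\{0\}$ and to $|D^*_X E_f(\bar{x}\mid f(\bar{x}))|^+<+\infty$, and that $\lip_X f(\bar{x})=|D^*_X E_f(\bar{x}\mid f(\bar{x}))|^+$. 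Everything then reduces to computing this projectional coderivative, i.e.\ to proving the formula (\ref{proj-prof-co-der}).

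The heart of the argument is (\ref{proj-prof-co-der}). Using the standard epigraphical dictionary for $g=f+\delta_X$ (namely $(x^*,-\lambda)\in N_{\epi g}(x,g(x))$ with $\lambda>0$ iff $x^*/\lambda\in\partial g(x)$, with $\lambda=0$ iff $x^*\in\partial^\infty g(x)$, and $N_{\epi g}$ never having a strictly positive last coordinate), the inclusions $\supseteq$ in all three cases are immediate from the definitions: boundary points $(x_k,f(x_k))$ carrying $u^*$-coordinate $\lambda_k$, together with the positive homogeneity of ${\rm proj}_{T_X(x_k)}$, reproduce exactly the defining sequences of $\lambda\partial_X f(\bar{x})$ and of $\partial^\infty_X f(\bar{x})$, while the case $\lambda<0$ is empty. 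The reverse inclusions are where Lemma \ref{lem-pro-epi} is indispensable. I would first invoke the equivalent replacement established in Theorem \ref{theo-point-convex}, carried over to the projectional coderivative as in the proof of Theorem \ref{theo-convex-aubin}, so as to compute $D^*_X E_f$ using proximal normals. Then, for any approaching pair $(x_k,\alpha_k)\to(\bar{x},f(\bar{x}))$ in $\epi(f+\delta_X)$ with $\alpha_k>f(x_k)$, Lemma \ref{lem-pro-epi} forces the proximal normal to be horizontal (so its $\lambda_k$-coordinate vanishes) and to be already a proximal normal at the boundary point $(x_k,f(x_k))$; local lower semicontinuity of $f$ then pins down $f(x_k)\to f(\bar{x})$, i.e.\ $x_k\xrightarrow[]{f+\delta_X}\bar{x}$. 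This slides every strictly-above contribution onto the epigraph boundary, whereafter the dictionary converts it into a projected horizon subgradient. For $\lambda>0$ one moreover uses that (a) makes $f$ continuous relative to $X$ near $\bar{x}$, which rules out upward jumps and hence forces $\alpha_k=f(x_k)$, so that $D^*_X E_f(\bar{x}\mid f(\bar{x}))(1)=\partial_X f(\bar{x})$.

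With (\ref{proj-prof-co-der}) in hand, $D^*_X E_f(\bar{x}\mid f(\bar{x}))(0)=\partial^\infty_X f(\bar{x})$ turns the criterion into (a) $\Leftrightarrow$ (d), and the cycle (a) $\Rightarrow$ (b) $\Rightarrow$ (c) $\Rightarrow$ (d) is book-keeping: under (a) the projected coderivative values are bounded by the modulus via Theorem \ref{theo-point-convex}, giving (b); since $\partial_X f$ is the outer limit of the mapping in (b), local boundedness passes to it, giving (c); and because $\partial^\infty_X f(\bar{x})$ is precisely the horizon cone attached to the outer semicontinuous mapping $\partial_X f$ at $\bar{x}$, local boundedness forces $\partial^\infty_X f(\bar{x})=\{0\}$, giving (d). For (iii), positive homogeneity and outer semicontinuity of $D^*_X E_f(\bar{x}\mid f(\bar{x}))$ reduce the outer norm to the $\lambda=1$ slice, whence $\lip_X f(\bar{x})=\max_{v\in\partial_X f(\bar{x})}\|v\|$; nonemptiness of $\partial_X f(\bar{x})$ follows from density of $\dom\partial(f+\delta_X)$ near $\bar{x}$ together with the local boundedness just obtained, so the outer limit is nonempty and, being closed and bounded, compact. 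For (i), given $w\in\partial^\infty(f+\delta_X)(x)$ with $x$ near $\bar{x}$, I would write $w=\lim_j\lambda_j v_j$ with $v_j\in\partial(f+\delta_X)(y_j)$ and apply Moreau's decomposition ${\rm proj}_{N_X(y_j)}(\lambda_j v_j)=\lambda_j v_j-{\rm proj}_{T_X(y_j)}(\lambda_j v_j)$; the projected part tends to $0$ since it lies in $\partial^\infty_X f(x)=\{0\}$ (the equivalent conditions hold at $x$ as well), so outer semicontinuity of $N_X$ sends the normal part to $w$, giving $w\in N_X(x)$.

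The main obstacle is the reverse inclusion in (\ref{proj-prof-co-der}), concretely the \emph{unconditional} inclusion $D^*_X E_f(\bar{x}\mid f(\bar{x}))(0)\subseteq\partial^\infty_X f(\bar{x})$ that is needed for the implication (d) $\Rightarrow$ (a). Lemma \ref{lem-pro-epi} removes the danger of spurious non-horizontal normals produced by strictly-above points, but one must still reconcile the projections ${\rm proj}_{T_X(\cdot)}$ taken at the moving base points with those appearing in the definition of $\partial^\infty_X f$; this calls for a diagonalization controlled by the outer semicontinuity of the normal-cone mapping $N_X$ and Moreau's decomposition, and it is the only genuinely delicate step. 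Once this identity is secured, the equivalences and the modulus formula are consequences of the now-standard machinery surrounding the generalized Mordukhovich criterion.
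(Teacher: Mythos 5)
Your overall scaffolding (the profile mapping $E_f$, the epigraphical normal cone formula, Lemma \ref{lem-pro-epi}, Theorems \ref{theo-point-convex} and \ref{theo-convex-aubin}) is the same as the paper's, but your logical architecture is inverted, and the inversion creates a genuine gap. You propose to establish the formula (\ref{proj-prof-co-der}) \emph{unconditionally} and then read off all four equivalences from the criterion $D^*_X E_f(\bar{x}\mid f(\bar{x}))(0)=\{0\}$; in your scheme the implication (d) $\Rightarrow$ (a) rests entirely on the unconditional inclusion $D^*_X E_f(\bar{x}\mid f(\bar{x}))(0)\subset \partial^\infty_X f(\bar{x})$. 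That inclusion is exactly where the trouble lies. After the reduction via proximal normals and Lemma \ref{lem-pro-epi}, elements of $D^*_X E_f(\bar{x}\mid f(\bar{x}))(0)$ split into limits of $\lambda_k\,{\rm proj}_{T_X(x_k)}(v_k)$ with $\lambda_k\downarrow 0$, $v_k\in\partial(f+\delta_X)(x_k)$ --- these do land in $\partial^\infty_X f(\bar{x})$ --- and limits of ${\rm proj}_{T_X(x_k)}(v_k)$ with $v_k\in\partial^\infty(f+\delta_X)(x_k)$. For the second type your plan is to write each $v_k$ as $\lim_j \mu_{k,j}w_{k,j}$ with $w_{k,j}\in\partial(f+\delta_X)(y_{k,j})$, $y_{k,j}\to x_k$, and diagonalize. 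But the defining sequences of $\partial^\infty_X f(\bar{x})$ in Definition \ref{def-proj_subg} require the projection to be taken at the points carrying the subgradients, i.e.\ onto $T_X(y_{k,j})$, not onto $T_X(x_k)$, and for a convex set the tangent cone mapping is inner- but not outer-semicontinuous: ${\rm proj}_{T_X(y)}(v)$ can jump up in norm for $y$ arbitrarily close to $x_k$ (if $v_k\in N_X(x_k)$ then ${\rm proj}_{T_X(x_k)}(v_k)=0$, while ${\rm proj}_{T_X(y)}(v_k)=v_k$ for any $y\in\inte X$ near $x_k$). Moreau's decomposition and outer semicontinuity of $N_X$ do not repair this: the cluster points of your diagonal sequence lie in $\partial^\infty_X f(\bar{x})$, but nothing forces them to equal the original limit $v$. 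This is not a technicality to be smoothed over: the paper asserts (ii) only under the heading ``when these conditions hold'', derives it there as a \emph{consequence} of (a), (b) and (i), and nowhere establishes the unconditional version, which appears to be genuinely open.

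The paper closes the cycle by a different device, and you need some version of it. From (b) it first proves property (i), $\partial^\infty(f+\delta_X)(x)\subset N_X(x)$ for $x$ near $\bar{x}$ (using the uniform bound from (b) and inner semicontinuity of $T_X$ on the convex set $X$); this makes every horizon subgradient project to zero \emph{at its own base point}. Then (b) $\Rightarrow$ (a) is proved by contradiction: a violating sequence furnished by Theorem \ref{theo-point-convex} in its proximal form is slid from $(x_k,\alpha_k)$ to $(x_k,f(x_k))$ by Lemma \ref{lem-pro-epi} --- crucially the base point $x_k$, hence the cone $T_X(x_k)$, never moves --- and property (i) rules out the case $\lambda_k=0$, leaving genuine subgradients $v_k/\lambda_k\in\partial(f+\delta_X)(x_k)$ whose projections blow up, contradicting (b). Your chain (a) $\Rightarrow$ (b) $\Rightarrow$ (c) $\Rightarrow$ (d), your argument for (iii), and your $\supseteq$ inclusions in (\ref{proj-prof-co-der}) are all sound; to repair the proposal, replace ``(d) $\Rightarrow$ unconditional (ii) $\Rightarrow$ (a)'' by ``(d) $\Leftrightarrow$ (b)'' (immediate from Definition \ref{def-proj_subg}) together with the paper's direct contradiction proof of (b) $\Rightarrow$ (a), and demote (\ref{proj-prof-co-der}) to what it is in the paper: a byproduct of the equivalences, not their source.
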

\begin{proof}
As noted at the very beginning of the section, all the results  can be verified by applying Theorem \ref{theo-point-convex} to the profile mapping $E_f$.  Let $Q:=\gph (E_f|_X)$. Clearly,  $Q=\epi(f+\delta_X)$ and $f+\delta_X$ is locally lsc at $\bar{x}$.
So by \cite[Theorem 8.9]{roc}, we have for all $x\in X$,
 \[
N_Q(x, f(x))=\{\lambda(v, -1)\mid \lambda>0, v\in \partial (f+\delta_X)(x)\}\cup\{(v, 0)\mid v\in \partial^{\infty}(f+\delta_X)(x)\},
\]
and hence by definition,
 \begin{equation}\label{prof-co-der}
D^*(E_f|_X)\left(x\mid f(x)\right)(\lambda)=\left\{
\begin{array}{ll}
\lambda \partial (f+\delta_X)(x) &\mbox{if}\;\lambda>0,\\[0.2cm]
\partial^\infty (f+\delta_X)(x) &\mbox{if}\;\lambda=0,\\[0.2cm]
\emptyset &\mbox{if}\;\lambda<0.
\end{array}
\right.
\end{equation}
By Lemma \ref{lem-pro-epi}, we have for all  $x\in X$ and $\alpha>f(x)$,
\begin{equation}\label{zzll}
\gph D^{\rm *prox}(E_f|_X)\left(x\mid \alpha\right)\subset \gph D^*(E_f|_X)\left(x\mid f(x)\right).
\end{equation}

[(a) $\Longrightarrow$ (b)]: By Theorem \ref{theo-point-convex}, there is some $\kappa\geq 0$ such that the inequality
      \[
\|{\rm proj}_{T_X(x)}(v)\| \leq \kappa |\lambda|\quad\forall v\in D^*(E_f|_X)\left(x\mid \alpha\right)(\lambda)
\]
holds for all $(x,\alpha)$ close enough to $(\bar{x}, f(\bar{x}))$ in $Q$ (i.e., $x\in X$ with $f(x)\leq \alpha$).
In view of the  continuity of $f$ at $\bar{x}$ relative to $X$ (due to the Lipschitzian continuity of $f$ at $\bar{x}$ relative to $X$), the above inequality holds in particular for all $x$ close enough to $\bar{x}$ in $X$ with $\alpha=f(x)$. In combining this with the formula (\ref{prof-co-der}), we assert that (b)   holds as the following inequality  holds for  all $x$ close enough to $\bar{x}$ in $X$:
      \[
\|{\rm proj}_{T_X(x)}(v)\| \leq \kappa  \quad\forall v\in \partial (f+\delta_X)(x).
\]
 Note that the convention ${\rm proj}_{T_X(x)}\partial (f+\delta_X)(x):=\emptyset$ is used in (b) for the case that $x\not\in X$.

[(b) $\Longrightarrow$ (a) and (i)]: According to (b),  there are some $\delta>0$ and $\tau>0$ such that
\begin{equation}\label{mtr}
{\rm proj}_{T_X(x)} \partial (f+\delta_X)(x) \subset \tau \B
\end{equation}
holds for all $x\in X$ with $\|x-\bar{x}\|\leq \delta$ and $|f(x)-f(\bar{x})|\leq \delta$.

Let $x\in X$ with $\|x-\bar{x}\|\leq \frac{\delta}{2}$ and $|f(x)-f(\bar{x})|\leq \frac{\delta}{2}$. First  we  show 
\begin{equation}\label{kongzhi-inclusion222}
 \partial^\infty (f+\delta_X)(x)\subset N_X(x).
 \end{equation}
Let $v\in \partial^\infty (f+\delta_X)(x)$.
By the definition of the horizon subgradient, there are sequences $\lambda_k\downarrow 0 $ and  $x_k   \xrightarrow[]{f}  x$ with $x_k\in X$  and  $v_k\in \partial (f+\delta_X)(x_k)$ for all $k$ such that $\lambda_k v_k\to v$. In view of (\ref{mtr}),  the following inequality holds for all $k$ sufficiently large:
\[
 \|{\rm proj}_{T_X(x_k)}(v_k)\|\leq \tau,
\]
or equivalently (as in the proof of Theorem \ref{theo-point-convex}),
\begin{equation}\label{zjls}
\max_{\tilde{w}\in T_X(x_k)\cap \Sph}\langle v_k, \tilde{w}\rangle\leq \tau.
\end{equation}
Let $w\in T_X(x)\cap \Sph$.
As $X$ is closed and convex  (implying that $w$ is a regular tangent vector to $X$ at $x$), it follows from regular tangent cone properties \cite[Theorem 6.26]{roc} that there exists some $w_k\in T_X(x_k)$ such that $w_k\rightarrow w$. In view of (\ref{zjls}), we have for all $k$ sufficiently large,
\[
\left\langle v_k, \frac{w_k}{\|w_k\|}\right\rangle\leq \tau \quad\mbox{and}\quad\left\langle \lambda_kv_k, \frac{w_k}{\|w_k\|}\right\rangle\leq \tau\lambda_k,
\]
implying that $\langle v, w\rangle\leq 0$ and hence  $v\in T_X(x)^*=N_X(x)$.  That is, (\ref{kongzhi-inclusion222}) follows.

Next we suppose by contradiction that (a) is not fulfilled. Then by Theorem \ref{theo-point-convex} again, there exist some $(x_k, \alpha_k) \to (\bar{x}, f(\bar{x}))$ with $x_k\in X$ and $f(x_k)\leq \alpha_k$, and some
\[
v_k\in D^{\rm *prox}(E_f|_X)\left(x_k\mid \alpha_k\right)(\lambda_k)\subset D^*(E_f|_X)\left(x_k\mid f(x_k)\right)(\lambda_k)
\]
 (the inclusion due to (\ref{zzll})) such that  the following inequality holds for all $k$:
      \begin{equation}\label{fanli}
\|{\rm proj}_{T_X(x_k)}(v_k)\| > k |\lambda_k|.
\end{equation}
Clearly, we have \[
\limsup_{k\to +\infty}f(x_k)\leq \limsup_{k\to +\infty}\alpha_k= f(\bar{x})\]
 and hence $f(x_k)\to f(\bar{x})$ (due to $f$ being locally lsc at $\bar{x}$).  In view of (\ref{kongzhi-inclusion222}), we have  for all $k$ sufficiently large, \[
  \partial^\infty (f+\delta_X)(x_k)\subset N_X(x_k) \Longleftrightarrow \|{\rm proj}_{T_X(x_k)}(v)\|=0\quad\forall v\in  \partial^\infty (f+\delta_X)(x_k).
 \]
Thus  for sufficiently large $k$, we deduce from (\ref{prof-co-der}) that $\lambda_k>0$ and hence   $\frac{v_k}{\lambda_k}\in  \partial(f+\delta_X)(x_k)$, for otherwise $\lambda_k=0$ would  imply   $v_k\in \partial^\infty (f+\delta_X)(x_k)$   and hence   $\|{\rm proj}_{T_X(x_k)}(v_k)\|=0$, contradicting to (\ref{fanli}). In combining this with the inequality  (\ref{fanli}), we assert that
      \[
\limsup_{k\rightarrow +\infty} \left\|{\rm proj}_{T_X(x_k)}\left(\frac{v_k}{\lambda_k}\right)\right\|=+\infty,
\]
contradicting to (b). This contradiction indicates that (a) must be fulfilled. As the relative continuity of $f$ is implied by (a), we get (i) immediately from (\ref{kongzhi-inclusion222}).

 [(b) $\Longleftrightarrow$ (d)]: The equivalence follows readily from the definition of the horizon projectional subgradients in Definition \ref{def-proj_subg}.

 When the properties described  in  (\ref{prof-co-der}), (a), (b) and (i) are taken into account, the formula for $D^*_X E_f\left(\bar{x}\mid f(\bar{x})\right)$ in (ii) can be  obtained in a straightforward  way from Definitions \ref{def-proj-outer-norm} and \ref{def-proj_subg}.
 In view of the fact that $f+\delta_X$ is finite and locally lsc at $\bar{x}$, we get from  \cite[corollary 8.10]{roc} that there exists some $\tiny x_k\xrightarrow[]{\;f+\delta_X\;} \bar{x}$ such that $\partial (f+\delta_X)(x_k)\not=\emptyset$.  So the nonemptiness of $\partial_X  f(\bar{x})$ follows from the boundedness of the sequence  ${\rm proj}_{T_X(x_k)}(v_k)$ with $v_k\in \partial (f+\delta_X)(x_k)$. The boundedness of $\partial_X  f(\bar{x})$ follows readily from the local boundedness in (b). That is, $\partial_X  f(\bar{x})$ is nonempty and compact. By the definition of the outer norm, we have
 \begin{equation}\label{waifanshu}
 |D^*_X E_f\left(\bar{x}\mid f(\bar{x})\right)|^+=\max_{v\in \partial_X  f(\bar{x})}\|v\|.
 \end{equation}
 The formula for ${\lip}_X f(\bar{x})$ in (iii) then follows from Theorem \ref{theo-convex-aubin}.

 [(c) $\Longrightarrow$ (b)]: The implication is trivial as ${\rm proj}_{T_X(x)}  \partial (f+\delta_X)(x) \subset \partial_X f(x)$ holds for all $x$ by definition.

 [(a) $\Longrightarrow$ (c)]: By (a), we  have $\lip_Xf(\bar{x})<+\infty$. Then for all $x$ close enough to $\bar{x}$ in $X$,  $f$ is locally Lipschitz continuous at $x$ relative to $X$ with
\[
\max_{v\in \partial_X f(x)}\|v\|= {\lip}_X f(x)\leq {\lip}_Xf(\bar{x}).
 \]
That is,  (c) follows.  This completes the proof.   \end{proof}

In what follows, we consider a proper, lsc, sublinear function $h$ on $\R^n$. It is well-known that there is a unique closed, convex set $D$ in $\R^n$ such that $h=\sigma_D$, i.e., $h$ can be expressed as the support function of $D$.  If $D$ is bounded, then $h$ is finite everywhere, entailing that $h$ is locally Lipschitz continuous everywhere, and in particular,
$${\lip}\,h(0)=\max_{v\in \partial h(0)}\|v\|=\max_{v\in D}\|v\|.$$
However, when $D$ is unbounded, $h$ is not finite everywhere anymore, and in this case, it is more desirable to study the relative Lipschitzian property of $h$ on $\dom h$, which, due to the positive homogeneity of $h$, amounts to the local Lipschitz continuity of $h$ at $0$ relative to $\dom h$.

In the following corollary,   we  apply  Theorem \ref{the-locally-lip-X} to fully  characterize the local Lipschitz continuity of $h$ at $0$ relative to $\dom h$ by describing $\partial_{\dom h}h(0)$ and $\partial^\infty_{\dom h}h(0)$ in terms of all the exposed faces  of $D$ along with   corresponding exposed faces of the horizon cone $D^\infty$.

\begin{corollary}[relative Lipschitzian property of sublinear functions]\label{coro-sublin-lip}
Let $h$ be a proper, lsc, sublinear function on $\R^n$ and let $D$ be the unique closed, convex set in $\R^n$ such that $h=\sigma_D$. For each $x\in (D^\infty)^*$, we denote by
\[
F_{D,x}:=\argmax_{v\in D}\langle v, x\rangle
\]
 the (possibly empty) face  of $D$ exposed by $x$ , and by
\[
F_{D^\infty, x}:=\argmax_{v\in D^\infty}\langle v, x\rangle
\]
 the nonempty  face of $D^\infty$  exposed also by $x$. Then the equality
\begin{equation}\label{guanxi}
(F_{D,x})^\infty=F_{D^\infty, x}
\end{equation}
holds for all $x\in (D^\infty)^*$ with $F_{D,x}\not=\emptyset$.  Moreover, in terms of the faces pairing $F_{D,x}$ and $F_{D^\infty, x}$,  the following properties hold:
\begin{description}
  \item[(a)] $\partial_{\dom h}h(0)$ is nonempty and closed with
\[
\partial_{\dom h}h(0)=\cl\bigcup_{x\in (D^\infty)^*}{\rm proj}_{\tiny (F_{D^\infty, x})^*}\,F_{D,x}\quad\mbox{and}\quad\partial^\infty_{\dom h}h(0)=(\partial_{\dom h}h(0))^\infty.
\]
 In particular,    $\partial_{\dom h}h(0)$ contains   the projections of $D$ on $(D^\infty)^*$, as well as all the bounded exposed faces of $D$.
  \item[(b)]  $h$ is locally Lipschitz continuous at 0 relative to $\dom h$ if and only if $\partial_{\dom h}h(0)$ is bounded.
\item [(c)] We have
\begin{equation}\label{mmoo}
{\lip}_{\dom h}h(0)=\sup_{v\in \partial_{\dom h}h(0)}\|v\|=\displaystyle\sup_{x\in (D^\infty)^*}e(F_{D,x}\,,  \,F_{D^\infty, x}).
\end{equation}
\item[(d)] If $D$ is  polyhedral or in other words $h$ is piecewise linear,   then $\partial_{\dom h}h(0)$ is bounded.
\end{description}
\end{corollary}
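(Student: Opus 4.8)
The plan is to apply Theorem \ref{the-locally-lip-X} to $f=h$ and $X=\dom h$, after assembling three elementary facts about the support function $h=\sigma_D$. First, by the conjugacy between $\sigma_D$ and $\delta_D$ one has $\partial h(x)=\argmax_{v\in D}\langle v,x\rangle=F_{D,x}$ for every $x\in\dom h$, and since $h\equiv+\infty$ off $\dom h$ we have $h+\delta_{\dom h}=h$, so $\partial(h+\delta_{\dom h})(x)=F_{D,x}$. Second, because $\dom h$ is the barrier cone of $D$, it is a convex cone with $(\dom h)^*=D^\infty$; hence for $x\in\dom h$ the normal cone is $N_{\dom h}(x)=(\dom h)^*\cap[x]^\perp=D^\infty\cap[x]^\perp=F_{D^\infty,x}$, and therefore $T_{\dom h}(x)=N_{\dom h}(x)^*=(F_{D^\infty,x})^*$. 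Third, to obtain (\ref{guanxi}) I would write the exposed face as $F_{D,x}=D\cap H_x$ with $H_x:=\{v\mid\langle v,x\rangle=\sigma_D(x)\}$ an affine hyperplane whose horizon cone is $[x]^\perp$, so the rule for horizon cones of intersections of closed convex sets gives $(F_{D,x})^\infty=D^\infty\cap[x]^\perp=F_{D^\infty,x}$ whenever $F_{D,x}\neq\emptyset$.

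For part (a) I would feed these facts into Definition \ref{def-proj_subg}. Positive homogeneity of $h$ makes both $\partial h(x)=F_{D,x}$ and $T_{\dom h}(x)=(F_{D^\infty,x})^*$ invariant under positive rescaling of $x$, and gives $h(tx)=t\,h(x)\to0$ for $x\in\dom h$; so along the defining sequences one may freeze the direction, which collapses the outer limit defining $\partial_{\dom h}h(0)$ to $\cl\bigcup_{x}{\rm proj}_{(F_{D^\infty,x})^*}F_{D,x}$. Terms with $x\notin\dom h$ are empty (there $F_{D,x}=\emptyset$), so the union may be taken over all $x\in(D^\infty)^*$; the choice $x=0$ yields ${\rm proj}_{(D^\infty)^*}D$, while any bounded exposed face has $F_{D^\infty,x}=(F_{D,x})^\infty=\{0\}$, whence $(F_{D^\infty,x})^*=\R^n$ and $F_{D,x}$ itself lies in the union. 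This gives the two ``in particular'' claims and nonemptiness, and closedness is automatic since an outer limit is closed. The identity $\partial^\infty_{\dom h}h(0)=(\partial_{\dom h}h(0))^\infty$ I would prove by matching Definition \ref{def-proj_subg}(b) with the definition of the horizon cone: each $\lambda_k{\rm proj}_{T_{\dom h}(x_k)}(v_k)$ is $\lambda_k$ times a member of $\partial_{\dom h}h(0)$, and conversely a representation $v=\lim\lambda_k w_k$ with $w_k$ in the union can be realised by rescaling each generating direction rapidly enough that $h(x_k)\to0$ is preserved.

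Parts (b) and (c) then follow from Theorem \ref{the-locally-lip-X}. Its equivalence of (a) and (d) says $h$ is locally Lipschitz relative to $\dom h$ iff $\partial^\infty_{\dom h}h(0)=\{0\}$, which by the horizon-cone identity just proved is exactly boundedness of the closed set $\partial_{\dom h}h(0)$, giving (b). For (c), the first equality ${\lip}_{\dom h}h(0)=\sup_{v\in\partial_{\dom h}h(0)}\|v\|$ is Theorem \ref{the-locally-lip-X}(iii) in the Lipschitz case and reads $+\infty=+\infty$ otherwise. The second equality is where the face pairing pays off: for the closed convex cone $C=F_{D^\infty,x}$ the projection/Moreau theorem \cite[Exercise 12.22]{roc} gives $d(v,C)=\|{\rm proj}_{C^*}(v)\|$, so $\sup_{v\in F_{D,x}}\|{\rm proj}_{(F_{D^\infty,x})^*}(v)\|=\sup_{v\in F_{D,x}}d(v,F_{D^\infty,x})=e(F_{D,x},F_{D^\infty,x})$; taking the supremum over $x$ and using the formula for $\partial_{\dom h}h(0)$ from (a) (norms being unaffected by closure) yields (\ref{mmoo}).

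Finally, for (d) I would use that a polyhedral $D$ has only finitely many faces, so the union in (a) is finite, and each exposed face admits a Minkowski--Weyl decomposition $F_{D,x}=P+(F_{D,x})^\infty=P+F_{D^\infty,x}$ with $P$ a bounded polytope; then $d(v,F_{D^\infty,x})\le\|p\|$ for $v=p+d$ forces $e(F_{D,x},F_{D^\infty,x})\le\max_{p\in P}\|p\|<+\infty$, so $\partial_{\dom h}h(0)$ is a finite union of bounded sets and hence bounded. The main obstacle I anticipate is not any single estimate but keeping the three identifications perfectly aligned: subdifferential equals exposed face, normal cone to $\dom h$ equals the horizon face $F_{D^\infty,x}$, and distance to that cone equals the norm of the polar projection. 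A secondary point requiring care is that $\dom h$ need not be closed; this is harmless for part (a), which invokes Definition \ref{def-proj_subg} only at points of $\dom h$ where $h$ is finite and $T_{\dom h}(x)=T_{\cl\dom h}(x)$, and for (b)--(c) one leans on the fact that, by homogeneity, ${\lip}_{\dom h}h(0)$ coincides with the global Lipschitz constant of $h$ on the cone $\dom h$, so the characterizations of Theorem \ref{the-locally-lip-X} transfer without requiring $\dom h$ itself to be closed.
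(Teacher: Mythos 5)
Your route is the paper's route: the same three identifications ($\partial h(x)=F_{D,x}$, $N_{\dom h}(x)=D^\infty\cap[x]^\perp=F_{D^\infty,x}$, $T_{\dom h}(x)=(F_{D^\infty,x})^*$ via \cite[Theorem 8.24 and Corollary 8.25]{roc}), the same homogeneity collapse of the outer limit for (a), the same Moreau-decomposition identity $d(v,F_{D^\infty,x})=\|{\rm proj}_{(F_{D^\infty,x})^*}(v)\|$ for (c), and the same Minkowski--Weyl argument for (d). But there is a genuine gap exactly at the point you dismiss as ``secondary'': Theorem \ref{the-locally-lip-X} is stated, and proved (through Theorem \ref{theo-point-convex}, whose sufficiency part runs Ekeland's variational principle on $\gph S|_X=\epi(f+\delta_X)$ and therefore needs this set to be closed), only for a \emph{closed} convex $X$, while $\dom h$ --- the barrier cone of $D$ --- need not be closed under the corollary's hypotheses. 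Your claim that ``the characterizations of Theorem \ref{the-locally-lip-X} transfer without requiring $\dom h$ itself to be closed'' by homogeneity is unjustified: homogeneity globalizes the Lipschitz property along the cone, but it does not repair the hypotheses of the theorem you invoke. Worse, the transfer cannot be made rigorous as stated: if $\dom h$ were not closed while $\partial_{\dom h}h(0)$ were bounded, the equivalence in (b) would actually be \emph{false} (Lipschitz continuity relative to $\dom h$ forces closedness of $\dom h$ by lower semicontinuity), so any correct proof must rule this situation out.

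This is precisely what the bulk of the paper's proof of (b) does: it shows that \emph{each} side of the equivalence forces $\dom h$ to be closed, after which Theorem \ref{the-locally-lip-X} applies legitimately. The easy direction you could recover: if $h$ is locally Lipschitz at $0$ relative to $\dom h$, then for $x_k\to x^*\not=0$ with $x_k\in\ri(\dom h)$, homogeneity and the Lipschitz estimate give $\limsup_k|h(x_k/\|x_k\|)|<+\infty$, and lower semicontinuity yields $h(x^*)<+\infty$, i.e. $x^*\in\dom h$. The nontrivial direction --- boundedness of $\partial_{\dom h}h(0)$ implies closedness of $\dom h$ --- is entirely missing from your proposal: the paper takes $x_k\to x^*\not=0$ with $x_k\in\ri(\dom h)$, introduces the lineality space $L:=(\dom h)^\perp=D^\infty\cap-D^\infty$ and the decomposition $D=(D\cap L^\perp)+L$, selects $\tilde v_k\in F_{D,x_k}\cap L^\perp$ (possible since $N_D(v_1+v_2)=N_D(v_1)$ for $v_1\in D\cap L^\perp$, $v_2\in L$), notes that $T_{\dom h}(x_k)=L^\perp$ for $x_k\in\ri(\dom h)$ so that $\tilde v_k={\rm proj}_{T_{\dom h}(x_k)}(\tilde v_k)\in\partial_{\dom h}h(0)$, extracts a convergent subsequence $\tilde v_k\to v^*$ from the assumed boundedness, and passes $x_k\in N_D(\tilde v_k)$ to the limit to get $v^*\in\partial h(x^*)$, hence $x^*\in\dom h$. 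Without this argument your derivation of (b) --- and with it the first equality in (\ref{mmoo}) --- does not go through.
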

\begin{proof} As $D^\infty$ is a closed and convex cone and $D=D+D^\infty$, we have $0\in F_{D^\infty, x}$ and $\langle v, x\rangle =0$ for all $v\in F_{D^\infty, x}$ with $x\in (D^\infty)^*$, and moreover in the case of $F_{D,x}\not=\emptyset$, (\ref{guanxi}) follows directly from the definition of horizon cones.

Clearly, $\dom h$ is a convex cone, not necessarily closed. Let $x\in \dom h$ be given arbitrarily. In view of \cite[Theorem 8.24 and Corollary 8.25]{roc}, we have  $(D^\infty)^*=\cl( \dom h)$, $(\dom h)^*=D^\infty$ and
$$\partial h(x)=
\{v\in D\mid x\in N_D(v)\}=\argmax_{v\in D}\langle v, x\rangle=:F_{D, x}.
$$
 So we have $$N_{\dom h}(x)=(\dom h)^*\cap [x]^\perp=D^\infty \cap [x]^\perp=\argmax_{v\in D^\infty}\langle v, x\rangle=:F_{D^\infty, x},$$
 and hence $T_{\dom h}(x)=(F_{D^\infty, x})^*$. 
Note that   $F_{D, x'}=\emptyset$ whenever $x'\in (D^\infty)^*\backslash \dom h$, and that the convention   ${\rm proj}_M \emptyset :=\emptyset$ is used  for any nonempty set $M$. The closedness of $\partial_{\dom h}h(0)$ and the formulas in (a) follow  readily from the definition of the projectional subgradients (Definition \ref{def-proj_subg}) and the positive homogeneity of $h$. As we have $0\in (D^\infty)^*$, $F_{D,\,0}=D$ and $F_{D^\infty,\, 0}=D^\infty$, $\partial_{\dom h}h(0)$ clearly  contains  the projections of $D$ on $(D^\infty)^*$, and  thus is nonempty. Assume that $F$ is  a bounded exposed face of $D$. Then by definition there is some $x'\in (D^\infty)^*$ such that $F=F_{D, x'}$. From (\ref{guanxi}) it follows that  $F_{D^\infty, x'}=\{0\}$ and $(F_{D^\infty, x'})^*=\R^n$. So $F$ is contained in $\partial_{\dom h}h(0)$ as $
{\rm proj}_{\tiny (F_{D^\infty, x'})^*}\,F_{D,x'}=F_{D,x'}$.

The equivalence in (b) follows directly from Theorem \ref{the-locally-lip-X} if the closedness of $\dom h$ is guaranteed on both sides. To see this, let $x_k\to x^*\not=0$ with $x_k\in \ri (\dom h)\backslash\{0\}$ for all $k$ and we will show $x^*\in \dom h$ for both sides.

First assume that $h$ is locally Lipschitz continuous at 0 relative to $\dom h$. Then we have
\[
\limsup_{k\to +\infty}|h(x_k/\|x_k\|)|=\limsup_{k\to +\infty}\frac{|h(x_k)-h(0)|}{\|x_k-0\|}<+\infty,
\]
which implies by the lower semicontinuity of $h$  that
\[
h(x^*/\|x^*\|)\leq \liminf_{k\to +\infty}h( x_k/\|x_k\|)<+\infty,
\]
and hence that $h(x^*)<+\infty$ or equivalently $x^*\in \dom h$.

Next assume that $\partial_{\dom h}h(0)$ is bounded.  Clearly, $L:=(\dom h)^\perp=D^\infty\cap -D^\infty$ is  the lineality space  of $D$. So  we have $D=(D\cap L^\perp)+L$  and hence  $N_D(v_1+v_2)=N_D(v_1)$  for all $v_1\in D\cap L^\perp$ and $v_2\in L$. As $x_k\in \ri (\dom h)$ is assumed, we have $(F_{D^\infty, x_k})^*=T_{\dom h}(x_k)=L^\perp$ and  $\partial h(x_k)\not=\emptyset$. Let $v_k\in \partial h(x_k)$ or equivalently  $v_k\in D$ with $x_k\in N_D(v_k)$. Then there is some $\tilde{v}_k\in D\cap L^\perp $ such that $x_k\in N_D(\tilde{v}_k)=N_D(v_k)$. Thus we have $\tilde{v}_k\in F_{D, x_k}$ and  $\tilde{v}_k={\rm proj}_{(F_{D^\infty, x_k})^*}(\tilde{v}_k)\in \partial_{\dom h}h(0)$.
As $\partial_{\dom h}h(0)$ is assumed to be bounded,  $\{\tilde{v}_k\}$ is clearly a bounded sequence. By taking a subsequence if necessary, we assume that $\tilde{v}_k\to v^*$. Then we have $v^*\in D$ with $x^*\in N_D(v^*)$ or equivalently $v^*\in \partial h(x^*)$. This implies that $x^*\in \dom h$ as expected.

The first equality in (\ref{mmoo}) follows from (b) and Theorem \ref{the-locally-lip-X} in a straightforward way, while the second one follows from (a), the definition of the excess of a set over another set (see Section 1 for the definition),  and  the fact that the following equalities hold for all $x\in (D^\infty)^*$ and $v\in F_{D, x}$:
\[
d(v, F_{D^\infty, x})=\|v-{\rm proj}_{F_{D^\infty, x}}(v)\|=\|{\rm proj}_{(F_{D^\infty, x})^*}(v)\|.
\]

It remains to show (d). Let $x\in (D^\infty)^*$ be given  arbitrarily.
As $D$ is assumed to be polyhedral,  $D^\infty$ and $F_{D,x}\not=\emptyset$ are also polyhedral  and there exists some bounded set $B_x$ such that $F_{D,x}=B_x+(F_{D,x})^\infty=B_x+F_{D^\infty,x}$ (the second equality due to (\ref{guanxi})). So we have
\[
e(F_{D,x},  \,F_{D^\infty, x}):=\sup_{v\in F_{D, x}}d(v, F_{D^\infty, x})=\sup_{b\in B_x, w\in F_{D^\infty, x}}d(b+w, F_{D^\infty, x})\leq \sup_{b\in B_x}\|b\|<+\infty.
\]
This suggests by (\ref{mmoo}) that  $\partial_{\dom h}h(0)$ is bounded, as every polyhedral set has only finitely many faces. This completes the proof.  \end{proof}

To end this subsection,  we illustrate Corollary \ref{coro-sublin-lip} by two simple examples.

\begin{example}
Consider a sublinear function $h:=\sigma_D$ with $D:=\{(x_1, x_2)^T\mid x_1>0,\,x_2\geq 1/x_1\}$. Clearly,   $D^\infty= \R^2_+$, $(D^\infty)^*=-\R^2_+$ and $D$ does not have one-dimensional face.  Moreover, the projection of $D$ on $(D^\infty)^*$ is  $(0, 0)^T$, and each $(x_1, 1/x_1)$ with $x_1>0$ is a zero-dimensional face of $D$.
 It is clear to see from Corollary \ref{coro-sublin-lip} (a)  that  \[\partial_{\dom h}h(0)=\{(x_1, 1/x_1)^T\mid x_1>0\}\cup \{(0, 0)^T\}.\]  Then by Corollary \ref{coro-sublin-lip} (b), $h$ is not locally Lipschitz continuous at 0 relative to $\dom h$ due to the unboundedness of $\partial_{\dom h}h(0)$. Explicitly, we have
\[
h(x)=\left\{
\begin{array}{ll}
-\sqrt{2x_1x_2} &\mbox{if}\;x_1\leq 0,\,x_2\leq 0,\\
+\infty &\mbox{otherwise},
\end{array}
\right.
\]
and in terms of $x_k:=(0, -1/k)^T$ and $x'_k:=(-1/k^2, -1/k)^T$,
\[
\limsup_{k\to +\infty}\frac{|h(x_k)-h(x'_k)|}{\|x_k-x'_k\|}=\limsup_{k\to +\infty}\sqrt{2k}=+\infty,
\]
implying that $h$ is not locally Lipschitz continuous at 0 relative to $\dom h$. In this case, however,  $\dom h$ is closed and $h$ is  continuous   relative to  $\dom h$.
\end{example}

\begin{example}
Consider a sublinear function $h:=\sigma_{D}$ with $$D:=\left\{(x_1, x_2)^T\mid -1\leq x_1\leq 1,\, x_2\geq 1-\sqrt{1-x_1^2}\right\}.$$
In this case, we have $D^\infty=\{0\}\times \R_+$,  $(D^\infty)^*=\{(x_1, x_2)^T\mid x_2\leq 0\}$, and by Corollary \ref{coro-sublin-lip} (a),
\[
 \partial_{\dom h}h(0)=\left\{(x_1, 0)^T\mid -1\leq x_1\leq 1\right\}\cup \left\{(x_1, x_2)^T\mid -1\leq x_1\leq 1,\, x_2= 1-\sqrt{1-x_1^2}\right\},
\]
which is bounded. Then by Corollary \ref{coro-sublin-lip} (b), $h$ is locally Lipschitz continuous at 0 relative to $\dom h$ with $${\lip}_{\dom h}h(0)=\max_{v\in \partial_{\dom h}h(0)}\|v\|=\sqrt{2}.$$
 Explicitly, we have $h(x)=\|x\|+x_2+\delta_{(D^\infty)^*}(x)$, by virtue of which  we can also verify the local Lipschitzian continuity of $h$ and calculate the constant ${\lip}_{\dom h}h(0)$.
\end{example}

\section{Level-set mappings and structural  subgradients}\label{sec-level-set and ss}

Given a function $f:\R^n\rightarrow \overline{\R}$, a point $\bar{x}\in \R^n$  where $f$ is finite and locally lsc,  and  a vector $\bar{v}\in \R^n$,    the level-set mapping
  \begin{equation}\label{lev-mapping}
  S: \alpha \mapsto \{x\mid f(x)-\langle \bar{v}, x-\bar{x}\rangle\leq \alpha\}
  \end{equation}
   fails to have the Lipschitz-like property at $f(\bar{x})$ for $\bar{x}$ if and only if $\bar{v} \in \partial f(\bar{x})$. This reinterpretation of subgradients has been pointed out in  \cite[Theorem 9.41]{roc} by applying the Mordukhovich criterion via the coderivative
 \[
\left[D^*S\left(f(\bar{x})\mid \bar{x}\right)\right]^{-1}(\lambda)=\left\{
\begin{array}{ll}
\lambda (\partial f(\bar{x})-\bar{v}) &\mbox{if}\;\lambda<0,\\[0.2cm]
-\partial^\infty f(\bar{x}) &\mbox{if}\;\lambda=0,\\[0.2cm]
\emptyset &\mbox{otherwise}.
\end{array}
\right.
\]
Moreover, the graphical modulus of $S$  at $f(\bar{x})$ for $\bar{x}$ can be  given by
\begin{equation}\label{momo}
\lip S(f(\bar{x})\mid \bar{x})=\frac{1}{d(\bar{v}, \partial f(\bar{x}))}.
\end{equation}

One typical circumstance  under which the level set-mapping  $S$ fails in an obvious way to have the Lipschitz-like property  is that the reference point does not belong to the interior of  $\dom S$.  For instance, whenever $f$ is convex and $\bar{v}\in \partial f(\bar{x})$,  we have
\[
\bar{x}\in \arg\min_{x\in \R^n}\{f(x)-\langle \bar{v}, x-\bar{x}\rangle\},
\]
implying that $\dom S=\{\alpha\in \R\mid \alpha\geq f(\bar{x})\}$ and hence that $f(\bar{x})\not\in {\rm int}(\dom S)$.   This motivates us to think of some weaker stability  properties that $S$ may often have, such as the Lipschitz-like property of $S$ relative to
\begin{equation}\label{XXX}
X:=\{\alpha\in \R\mid \alpha\geq f(\bar{x})\}
 \end{equation}
  at $f(\bar{x})$  for $\bar{x}$. It turns out in the sequel that the level set-mapping $S$ fails to have the Lipschitz-like property of $S$ relative to $X$ at $f(\bar{x})$ for $\bar{x}$ if and only if
    \begin{equation}\label{struc-sub}
\bar{v}\in \partial^>_{\bar{v}} f(\bar{x}):=\left\{ \lim_{k\rightarrow +\infty}v_k\mid \exists x_k\rightarrow_f \bar{x},\;\forall k:\,f(x_k)>f(\bar{x})+\langle \bar{v}, x_k-\bar{x}\rangle\,\mbox{and}\,v_k\in \partial f(x_k)\right\},
 \end{equation}
where $\partial^>_{\bar{v}} f(\bar{x})$, called the outer limiting subdifferential set of $f$ at $\bar{x}$ with respect to $\bar{v}$, consisting of outer limiting subgradients with respect to $\bar{v}$ defined using not all nearby $f-$attentive points $x$ unless $f(x)>f(\bar{x})+\langle \bar{v}, x-\bar{x}\rangle$ (implying that $\partial^>_{\bar{v}} f(\bar{x})\subset \partial f(\bar{x})$).
In the case of $\bar{v}=0$, $\partial^>_{\bar{v}} f(\bar{x})$ reduces to the outer limiting subdifferential set (denoted by $\partial^>f(\bar{x})$ without mention of $\bar{v}$), which has been studied extensively in the literature \cite{ac14, io08, knt10,fhko10,io15,ca2, lmy2017, ers2019}.

In the following, we will first provide  formulas for the projectional  coderivative  $D^*_X S\left(f(\bar{x})\mid \bar{x}\right)$ and its outer norm, and then apply Theorem \ref{theo-convex-aubin} to characterize the Lipschitz-like property of $S$ relative to $X$ at $f(\bar{x})$ for $\bar{x}$ via the outer limiting subdifferential set $\partial^>_{\bar{v}} f(\bar{x})$.

\begin{proposition}[projectional coderivative of level-set mappings]\label{ex-stru}
 Given a function $f:\R^n\rightarrow \overline{\R}$, a point $\bar{x}\in \R^n$  where $f$ is finite and locally lsc,  a vector $\bar{v}\in \R^n$, and a  level-set mapping $S$ in the form of (\ref{lev-mapping}),  we have
  \[
\left[D^*_X S\left(f(\bar{x})\mid \bar{x}\right)\right]^{-1}(\lambda)=\left\{
\begin{array}{ll}
\lambda \left(\partial^>_{\bar{v}} f(\bar{x})-\bar{v}\right) &\mbox{if}\;\lambda<0,\\[0.2cm]
-\partial^\infty \tilde{f}(\bar{x})\cup-\pos (\partial \tilde{f}(\bar{x})) &\mbox{if}\;\lambda=0,\\[0.2cm]
\emptyset &\mbox{otherwise},
\end{array}
\right.
\]
and
\[
|D^*_XS\left(f(\bar{x})\mid \bar{x}\right)|^+=\frac{1}{d(\bar{v}, \partial^>_{\bar{v}} f(\bar{x}))},
\]
where $\tilde{f}(x):=\max\{f(x)-\langle\bar{v}, x-\bar{x}\rangle, f(\bar{x})\}$ and $X$ is given by (\ref{XXX}).
 \end{proposition}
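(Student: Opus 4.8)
The plan is to reduce everything to the epigraphical geometry of the auxiliary function $\tilde f$ and then read the projectional coderivative off Definition \ref{def-proj-outer-norm}. Write $g(x):=f(x)-\langle\bar v,x-\bar x\rangle$, so $g(\bar x)=f(\bar x)$ and $S(\alpha)=\{x\mid g(x)\le\alpha\}$. The first observation is that $\gph S|_X$ is exactly the coordinate swap of $\epi\tilde f$: indeed $(\alpha,x)\in\gph S|_X$ iff $\alpha\ge f(\bar x)$ and $g(x)\le\alpha$, i.e. iff $\alpha\ge\max\{g(x),f(\bar x)\}=\tilde f(x)$, so that $(x,\alpha)\in\epi\tilde f$ (equivalently $S|_X=(E_{\tilde f})^{-1}$). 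Hence $(\alpha^*,x^*)\in N_{\gph S|_X}(\alpha,x)$ iff $(x^*,\alpha^*)\in N_{\epi\tilde f}(x,\alpha)$, and since $\tilde f$ is finite and locally lsc at $\bar x$, \cite[Theorem 8.9]{roc} gives, at every graph point $\alpha=\tilde f(x)$,
\[
N_{\gph S|_X}(\alpha,x)=\{(-\lambda,\lambda v)\mid\lambda>0,\ v\in\partial\tilde f(x)\}\cup\{(0,v)\mid v\in\partial^\infty\tilde f(x)\}.
\]
By the regular-normal description of epigraphs (cf. Lemma \ref{lem-pro-epi}), points strictly above the graph, $\alpha>\tilde f(x)$, carry only horizontal normals, i.e. normals with vanishing domain-component. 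Finally $T_X(\alpha)=\R$ for $\alpha>f(\bar x)$ and $T_X(\alpha)=\R_+$ for $\alpha=f(\bar x)$, so ${\rm proj}_{T_X(\alpha)}$ is the identity on the former set and $s\mapsto\max\{s,0\}$ on the latter.

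With these ingredients I would unwind, via Definition \ref{def-proj-outer-norm}, that $x^*\in[D^*_XS(f(\bar x)\mid\bar x)]^{-1}(\lambda)$ means there are $(\alpha_k,x_k)\xrightarrow[]{\gph S|_X}(f(\bar x),\bar x)$ and $(a_k,b_k)\in N_{\gph S|_X}(\alpha_k,x_k)$ with ${\rm proj}_{T_X(\alpha_k)}(a_k)\to\lambda$ and $b_k\to-x^*$. Since every $a_k\le0$ by the formula above, and since the projection is nonnegative whenever $\alpha_k=f(\bar x)$, the case $\lambda>0$ is immediately impossible, giving $\emptyset$. For $\lambda<0$, the projected domain-component can be negative only if $\alpha_k>f(\bar x)$ eventually, which (using that above-graph points give only horizontal normals) forces $\alpha_k=\tilde f(x_k)=g(x_k)>f(\bar x)$, i.e. $f(x_k)>f(\bar x)+\langle\bar v,x_k-\bar x\rangle$ — precisely the strict inequality defining $\partial^>_{\bar v}f(\bar x)$ in (\ref{struc-sub}). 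On this region ${\rm proj}_{T_X(\alpha_k)}$ is the identity, so $a_k=-\lambda_k\to\lambda$ and $b_k=\lambda_kv_k\to-x^*$ with $v_k\in\partial\tilde f(x_k)$; local lower semicontinuity makes $\tilde f=g$ near each $x_k$, whence $\partial\tilde f(x_k)=\partial g(x_k)=\partial f(x_k)-\bar v$. Passing to the limit (with $\lambda_k\to-\lambda>0$ and $x_k\rightarrow_f\bar x$, since $g(x_k)\to f(\bar x)$ and $\langle\bar v,x_k-\bar x\rangle\to0$) yields $x^*/\lambda+\bar v\in\partial^>_{\bar v}f(\bar x)$, that is $x^*\in\lambda(\partial^>_{\bar v}f(\bar x)-\bar v)$; reversing the construction gives the reverse inclusion.

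The delicate case is $\lambda=0$, which I would settle through the outer semicontinuity (closedness) of the limiting normal cone of $\epi\tilde f$. After passing to a subsequence an admissible sequence falls into one of two regimes: horizon normals $(0,v_k)$ with $v_k\in\partial^\infty\tilde f(x_k)$, whose projection is automatically $0$; or first-branch normals $(-\lambda_k,\lambda_kv_k)$ over the boundary $\alpha_k=f(\bar x)$ of $X$, whose negative domain-component is annihilated by ${\rm proj}_{\R_+}$. In either regime the underlying epigraphical vectors $(\lambda_kv_k,-\lambda_k)$, respectively $(v_k,0)$, lie in $N_{\epi\tilde f}(x_k,\tilde f(x_k))$ with $\tilde f(x_k)\to\tilde f(\bar x)$, so their limit lies in $N_{\epi\tilde f}(\bar x,\tilde f(\bar x))$; decomposing this cone by \cite[Theorem 8.9]{roc} produces exactly $-\pos(\partial\tilde f(\bar x))$ (when the limiting multiplier is positive) and $-\partial^\infty\tilde f(\bar x)$ (when it is zero), which is the claimed value. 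I expect this to be the main obstacle: one must check that no extra elements slip in — for instance that first-branch normals over $\alpha_k>f(\bar x)$ with $\lambda_k\to0$ fold back into $-\partial^\infty\tilde f(\bar x)$ — and, crucially, that the osc-landing argument delivers $\pos(\partial\tilde f(\bar x))$ itself (the decomposition of $N_{\epi\tilde f}(\bar x,\tilde f(\bar x))$ is already closed) rather than merely its closure.

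Once the three branches are established, the outer-norm formula is a short consequence of the $\lambda<0$ branch. By definition $\alpha^*\in D^*_XS(f(\bar x)\mid\bar x)(x^*)$ iff $x^*\in[D^*_XS(f(\bar x)\mid\bar x)]^{-1}(\alpha^*)$, and only $\alpha^*=\lambda<0$ yields nonzero values, namely $x^*=\lambda(w-\bar v)$ with $w\in\partial^>_{\bar v}f(\bar x)$ and $\|x^*\|=|\lambda|\,\|w-\bar v\|$. Maximizing $|\alpha^*|=|\lambda|$ subject to $\|x^*\|\le1$ therefore gives $|D^*_XS(f(\bar x)\mid\bar x)|^+=1/\inf_{w\in\partial^>_{\bar v}f(\bar x)}\|w-\bar v\|=1/d(\bar v,\partial^>_{\bar v}f(\bar x))$, with the usual conventions ($d(\bar v,\emptyset)=+\infty$, and value $+\infty$ when $\bar v\in\partial^>_{\bar v}f(\bar x)$) covering the degenerate cases; combined with Theorem \ref{theo-convex-aubin} this is what drives the announced characterization of the relative Lipschitz-like property.
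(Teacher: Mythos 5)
Your reduction of $\gph S|_X$ to $\epi\tilde f$, the appeal to \cite[Theorem 8.9]{roc}, the disposal of $\lambda>0$, and the outer-norm computation all coincide with the paper's proof. But there is a genuine gap at the heart of the $\lambda<0$ case (which you flag as routine, reserving your worry for $\lambda=0$): you claim that points of $\epi\tilde f$ strictly above the graph ``carry only horizontal normals,'' citing Lemma \ref{lem-pro-epi}, and you use this to force $\alpha_k=\tilde f(x_k)$ whenever the domain-component of the normal is negative. That claim is correct for regular and proximal normals (which is what Lemma \ref{lem-pro-epi} addresses), but it is false for the \emph{limiting} normal cone $N_{\gph S|_X}$, which is what Definition \ref{def-proj-outer-norm} actually uses: limiting normals at $(x,\alpha)$ with $\alpha>\tilde f(x)$ are limits of regular normals at \emph{nearby} epigraph points, and these can be graph points whose levels drop to $\alpha$ across a jump of $\tilde f$. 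Concretely, take $\bar v=0$, $\bar x=0$, and $f$ the nonincreasing step function with $f(0)=0$ and $f\equiv a_k$ on $(t_{k+1},t_k]$, where $t_k\downarrow 0$, $a_k\downarrow 0$ (this $f$ is lsc and $\tilde f=f$). The points $(t_{k+1},a_k)$ lie strictly above the graph, converge to $(\bar x,f(\bar x))$, and $N_{\epi\tilde f}(t_{k+1},a_k)$ contains the downward vector $(0,-1)$, obtained as a limit of regular normals at the flat graph points $(x,a_k)$ with $x\in(t_{k+1},t_k)$. In the swapped coordinates this gives an admissible sequence for $\lambda=-1$ running entirely through above-graph points, which your argument asserts cannot exist; the proposition survives only because such normals happen to fold back into $-\partial^>_{\bar v}f(\bar x)$, and your proof never shows that they do.

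This is precisely the difficulty the paper's proof is built to handle: starting from a limiting normal $(\lambda_k^*,-v_k^*)$ at a possibly above-graph point $(x_k,\alpha_k)$ with $\lambda_k^*<0$, it approximates by proximal normals \cite[Exercise 6.18]{roc} at auxiliary points $(x_k',\alpha_k')$, with tolerance $\varepsilon_k<\min\{\alpha_k-f(\bar x),\,-\lambda_k^*\}$. The second bound keeps the vertical component of the proximal normal strictly negative, so Lemma \ref{lem-pro-epi} — now legitimately applied, to proximal normals — forces $\alpha_k'=\tilde f(x_k')$; the first bound keeps $\tilde f(x_k')\geq\alpha_k-\varepsilon_k>f(\bar x)$, preserving the strict inequality that defines $\partial^>_{\bar v}f(\bar x)$ in (\ref{struc-sub}). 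Your argument is missing this approximation step; inserting it and rerunning your limit computation on the proximal sequence closes the gap and essentially reproduces the paper's proof. Note also that the same false dichotomy silently underlies your two-regime analysis of $\lambda=0$ (the case the paper omits), so that part would need the identical repair before the regimes you list can be claimed to be exhaustive.
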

\begin{proof} The formula for  $|D^*_X S\left(f(\bar{x})\mid \bar{x}\right)|^+$ follows readily from the formula for $D^*_XS\left(f(\bar{x})\mid \bar{x}\right)$ and the definition of outer norms.

Due to  $f$ being finite and locally lsc at $\bar{x}$, $\tilde{f}$ is also finite and locally lsc at $\bar{x}$. Moreover, we have $\tilde{f}(x)\geq f(\bar{x})$ for all $x$,  and $\partial \tilde{f}(x)=\partial f(x)-\bar{v}$ for all $x$ with $f(\bar{x})<f(x)-\langle\bar{v}, x-\bar{x}\rangle<+\infty$. Let $E:=\gph S|_X$.  Then, $E=\{(\alpha, x)\mid (x, \alpha)\in \epi \tilde{f}\}$. By \cite[Theorem 8.9]{roc}, we have
\begin{equation}\label{piaoliang}
N_E(\tilde{f}(x), x)=\{\lambda(-1, v)\mid \lambda>0, v\in \partial\tilde{f}(x)\}\cup\{(0, v)\mid v\in \partial^{\infty}\tilde{f}(x)\}
\end{equation}
for all $(\tilde{f}(x), x)$ close enough to $(f(\bar{x}), \bar{x})$.

Let $(\lambda, v)\in \R\times \R^n$ be such that  $\lambda\in D^*_XS\left(f(\bar{x})\mid \bar{x}\right)(v)$. By definition, there are some $(\alpha_k, x_k)\rightarrow (f(\bar{x}),\bar{x})$ with $\tilde{f}(x_k)\leq \alpha_k$ and some $(\lambda_k^*, -v_k^*)\in N_{E}(\alpha_k, x_k)$ such that $v_k^*\rightarrow v$ and
${\rm proj}_{T_X(\alpha_k)}(\lambda^*_k)\rightarrow \lambda$. Due to $f(\bar{x})=\tilde{f}(\bar{x})$ and $f(\bar{x})\leq \tilde{f}(x_k)$ for all $k$, we have $\alpha_k\rightarrow \tilde{f}(\bar{x})$ and $\tilde{f}(x_k)\rightarrow \tilde{f}(\bar{x})$.  As we have $(-v_k^*, \lambda_k^*)\in N_{\epi \tilde{f}}(x_k, \alpha_k)$ for all $k$, we have $\lambda^*_k\leq 0$ and hence ${\rm proj}_{T_X(\alpha_k)}(\lambda^*_k)\leq 0$ for all $k$. So we have $\lambda\leq 0$, implying that  $\left[D^*_XS\left(f(\bar{x})\mid \bar{x}\right)\right]^{-1}(\lambda)=\emptyset$ whenever $\lambda>0$.

We now consider the case that $\lambda<0$.  In this case, by taking a subsequence if necessary,   we have ${\rm proj}_{T_X(\alpha_k)}(\lambda^*_k)<0$ for all $k$. This entails that  $\alpha_k>f(\bar{x})$, $T_X(\alpha_k)=\R$  and $\lambda^*_k={\rm proj}_{T_X(\alpha_k)}(\lambda^*_k)<0$ for all $k$.  Let $0<\varepsilon_k<\min\{\alpha_k-f(\bar{x}), -\lambda_k^*\}$ for all $k$. Clearly, $\varepsilon_k\downarrow 0$.  In view of $(-v_k^*, \lambda_k^*)\in N_{\epi \tilde{f}}(x_k, \alpha_k)$ for all $k$, we can find  some
 \[
 (-v'_k, \lambda'_k)\in N^{{\rm prox}}_{\epi \tilde{f}}(x'_k, \alpha'_k)
 \]
  with $(x'_k, \alpha'_k)\in \epi \tilde{f}$,  $\|(x'_k, \alpha'_k)-(x_k, \alpha_k)\|\leq \varepsilon_k$ and $\|(-v'_k, \lambda'_k)-(-v_k^*, \lambda^*_k)\|\leq \varepsilon_k$. Clearly, we have $x'_k\to \bar{x}$ with $\tilde{f}(x'_k)\rightarrow f(\bar{x})$, and  $(v'_k, \lambda'_k)\rightarrow (v, \lambda)$ with $\lambda'_k\leq \lambda_k^*+\varepsilon_k<0$  for all $k$. In view of Lemma \ref{lem-pro-epi}, we have  $\alpha'_k=\tilde{f}(x'_k)$ for all $k$, and hence $\tilde{f}(x'_k)\geq \alpha_k-\varepsilon_k>f(\bar{x})$ and \[(-v'_k, \lambda'_k)\in  N_{\epi \tilde{f}}(x'_k, \tilde{f}(x'_k))\]  for all $k$. By \cite[Theorem 8.9]{roc}, we have
  $v'_k/\lambda'_k\in \partial \tilde{f}(x'_k)=\partial f(x'_k)-\bar{v}$ for all $k$, implying by definition that    $v/\lambda\in \partial^>_{\bar{v}} f(\bar{x})-\bar{v}$. Thus,
  we have
  \begin{equation}\label{bbhang}
  \left[D^*_XS\left(f(\bar{x})\mid \bar{x}\right)\right]^{-1}(\lambda)\subset \lambda (\partial^>_{\bar{v}}f(\bar{x})-\bar{v}).
  \end{equation}
   Conversely, let $v'\in \partial^>_{\bar{v}} f(\bar{x})$. Then by definition,
  there are some $x'_k\rightarrow_f \bar{x}$ and $v'_k\rightarrow v'$ such that $\tilde{f}(x'_k)=f(x'_k)-\langle\bar{v}, x'_k-\bar{x}\rangle>f(\bar{x})$ and
   $v'_k-\bar{v}\in \partial f(x'_k)-\bar{v}=\partial \tilde{f}(x'_k)$ for all $k$. Let $\alpha'_k:=\tilde{f}(x'_k)$ for all $k$.    Clearly, we  have $(\alpha'_k, x'_k)\rightarrow (f(\bar{x}),\bar{x})$,  $\lambda (v'_k-\bar{v})\rightarrow \lambda (v'-\bar{v})$ and ${\rm proj}_{T_X(\alpha'_k)}(\lambda)={\rm proj}_{\R}(\lambda)=\lambda\rightarrow \lambda$ with $(\alpha'_k, x'_k)\in E$ and   $(\lambda, -\lambda (v'_k-\bar{v}))\in N_{E}(\alpha'_k, x'_k)$ for all $k$. That is,  $\lambda (v'-\bar{v})\in \left[D^*_XS\left(f(\bar{x})\mid \bar{x}\right)\right]^{-1}(\lambda)$. The reverse inclusion in (\ref{bbhang}) then follows.

We omit the proof for the case that $\lambda=0$ as it can be obtained in a similar way.  This completes the proof. \end{proof}

By the formulas for the projectional coderivative of level-set mappings and its outer norm presented in Proposition \ref{ex-stru}, we can apply Theorem \ref{theo-convex-aubin}  in a straightforward way to give a characterization  for  the Lipschitz-like property  of the level-set mapping $S$  relative to $X$   at $f(\bar{x})$ for $\bar{x}$ via the structural subgradients defined in (\ref{struc-sub}).

\begin{theorem}[reinterpretation of  structural subgradients]\label{theo-ss-reinter}
Consider a function $f:\R^n\rightarrow \overline{\R}$, a point $\bar{x}\in \R^n$  where $f$ is finite and locally lsc, and a vector $\bar{v}\in \R^n$.
The level-set mapping
 \[
  S: \alpha \mapsto \{x\mid f(x)-\langle \bar{v}, x-\bar{x}\rangle\leq \alpha\}
 \]
has the Lipschitz-like property relative to
\[
X:=\{\alpha\in \R\mid \alpha\geq f(\bar{x})\}
\]
 at $f(\bar{x})$ for $\bar{x}$ if and only if
 \[
 \bar{v}\not\in \partial^>_{\bar{v}} f(\bar{x}).
 \]
 Moreover,
 \[
 {\rm lip}_X S(f(\bar{x})\mid\bar{x})=\frac{1}{d(\bar{v}, \partial^>_{\bar{v}} f(\bar{x}))}.
 \]
 \end{theorem}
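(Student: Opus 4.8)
The plan is to read the conclusion directly off the generalized Mordukhovich criterion (Theorem \ref{theo-convex-aubin}), feeding it the projectional coderivative already computed in Proposition \ref{ex-stru}. First I would check the two hypotheses of Theorem \ref{theo-convex-aubin}. The set $X=\{\alpha\in\R\mid\alpha\ge f(\bar{x})\}$ is a closed half-line in $\R$, hence closed and convex. For local closedness of $\gph S$ at $(f(\bar{x}),\bar{x})$, write $g(x):=f(x)-\langle\bar{v},x-\bar{x}\rangle$, so that $\gph S=\{(\alpha,x)\mid g(x)\le\alpha\}$ is the epigraph of $g$ with its two coordinate blocks swapped. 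Since $f$ is locally lsc at $\bar{x}$ and the affine term is continuous, $g$ is locally lsc at $\bar{x}$ with $g(\bar{x})=f(\bar{x})$; thus $\epi g$ is locally closed at $(\bar{x},f(\bar{x}))$ and $\gph S$ is locally closed at $(f(\bar{x}),\bar{x})$.

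With both hypotheses in place, Theorem \ref{theo-convex-aubin} asserts that $S$ has the Lipschitz-like property relative to $X$ at $f(\bar{x})$ for $\bar{x}$ if and only if $D^*_X S(f(\bar{x})\mid\bar{x})(0)=\{0\}$, and that ${\rm lip}_X S(f(\bar{x})\mid\bar{x})=|D^*_X S(f(\bar{x})\mid\bar{x})|^+$. I would translate the criterion through the inverse description in Proposition \ref{ex-stru}: one has $\lambda\in D^*_X S(f(\bar{x})\mid\bar{x})(0)$ exactly when $0\in[D^*_X S(f(\bar{x})\mid\bar{x})]^{-1}(\lambda)$. Reading off the three branches: for $\lambda>0$ the inverse is empty; for $\lambda=0$ the inverse is $-\partial^\infty\tilde{f}(\bar{x})\cup-\pos(\partial\tilde{f}(\bar{x}))$, which always contains $0$ (since $0\in\partial^\infty\tilde{f}(\bar{x})$), consistent with positive homogeneity; and for $\lambda<0$ the inverse is $\lambda(\partial^>_{\bar{v}}f(\bar{x})-\bar{v})$, which contains $0$ precisely when $\bar{v}\in\partial^>_{\bar{v}}f(\bar{x})$. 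Hence $D^*_X S(f(\bar{x})\mid\bar{x})(0)=\{0\}$ holds if and only if no negative $\lambda$ admits $0$ in its inverse, that is, if and only if $\bar{v}\notin\partial^>_{\bar{v}}f(\bar{x})$, which is the claimed equivalence. For the modulus I would simply combine the second assertion of Theorem \ref{theo-convex-aubin} with the outer-norm value recorded in Proposition \ref{ex-stru}, obtaining ${\rm lip}_X S(f(\bar{x})\mid\bar{x})=|D^*_X S(f(\bar{x})\mid\bar{x})|^+=1/d(\bar{v},\partial^>_{\bar{v}}f(\bar{x}))$. The only edge case needing a remark is when $\partial^>_{\bar{v}}f(\bar{x})=\emptyset$: then $d(\bar{v},\partial^>_{\bar{v}}f(\bar{x}))=+\infty$ and the formula yields modulus $0$, consistent with the fact that trivially $\bar{v}\notin\partial^>_{\bar{v}}f(\bar{x})$ and $S$ enjoys the relative Lipschitz-like property with vanishing modulus.

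In truth the present statement is an assembly step, and its substance lies upstream. The decisive work is already spent in Proposition \ref{ex-stru}, where Lemma \ref{lem-pro-epi} (proximal normals to an epigraph over a point strictly above the graph have zero vertical component and descend to the graph) is exactly what forces the projectional coderivative to register only those nearby points with $f(x)>f(\bar{x})+\langle\bar{v},x-\bar{x}\rangle$, thereby producing the outer limiting set $\partial^>_{\bar{v}}f(\bar{x})$ rather than the full subdifferential $\partial f(\bar{x})$ that appears in the classical non-relative criterion \cite[Theorem 9.41]{roc}; the remaining work is carried by Theorem \ref{theo-convex-aubin} itself. Consequently the only genuine obstacle here is bookkeeping: correctly matching the dimensional roles (input $\alpha\in\R$, output $x\in\R^n$, so that $D^*_X S(f(\bar{x})\mid\bar{x})\colon\R^n\rightrightarrows\R$ and the criterion must be read on the scalar $\lambda$-side), after which both the equivalence and the modulus formula drop out immediately.
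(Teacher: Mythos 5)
Your proposal is correct and takes essentially the same route as the paper: the paper offers no separate proof of Theorem~\ref{theo-ss-reinter}, presenting it exactly as you do, namely as a direct assembly of the projectional coderivative and outer-norm formulas of Proposition~\ref{ex-stru} with the generalized Mordukhovich criterion of Theorem~\ref{theo-convex-aubin}, read on the scalar side through the inverse mapping $[D^*_X S(f(\bar{x})\mid\bar{x})]^{-1}$. Your explicit verification of the hypotheses (closedness and convexity of $X$, and local closedness of $\gph S$ at $(f(\bar{x}),\bar{x})$ via local lower semicontinuity of the tilted function) is diligence the paper leaves implicit, and your branch-by-branch reading of the criterion matches the intended argument.
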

\begin{remark}
The computation of the outer limiting subdifferential is not easy  in general. However, to check the condition $\bar{v}\not\in \partial^>_{\bar{v}} f(x)$ may be a much easier job. We note that the authors in \cite{ac14}  gave a concrete
example in electrical circuits where the condition $0\not\in \partial^>_0 f(x)$ could be checkable.
\end{remark}

To end this section, we demonstrate by an example that the structural subgradients in $\partial^>_{\bar{v}} f(\bar{x})$ are often located on the boundary of $\partial f(\bar{x})$, and that the graphical modulus  ${\rm lip}_X S(f(\bar{x})\mid\bar{x})$ can be strictly smaller than  ${\rm lip} S(f(\bar{x})\mid\bar{x})$ even when both are finite.
 \begin{example}\label{ex-euclidean}
Consider the absolute value function  $f(x)=|x|$.  By some direct calculation, we have  $\partial f(0)=[-1, 1]$ and
\[
\partial^>_{\bar{v}} f(0)=\left\{
\begin{array}{ll}
\{-1, 1\}&\mbox{if}\;\bar{v}\in (-1, 1),\\[0.2cm]
\{ -1\}&\mbox{if}\;\bar{v}\in [1, +\infty),\\[0.2cm]
\{1\}&\mbox{if}\;\bar{v}\in (-\infty, -1].\\[0.2cm]
\end{array}
\right.
\]
So we have $\bar{v}\not\in \partial^>_{\bar{v}} f(0)$ for all $\bar{v}\in \R$. Then by Theorem \ref{theo-ss-reinter} or by the definition, we assert that
 for all $\bar{v}\in \R$, the level-set mapping
  \[
  S: \alpha \mapsto \{x\in \R\mid |x|-\bar{v} x\leq \alpha\}
 \]
 has the Lipschitz-like property relative to $\R_+$  at $0$ for $0$ with
 \[
{\lip}_{\R_+} S(0\mid 0)=\left\{
\begin{array}{ll}
\frac{1}{\min\{1-\bar{v}, 1+\bar{v}\}}&\mbox{if}\;\bar{v}\in (-1, 1),\\[0.2cm]
\frac{1}{1+\bar{v}}&\mbox{if}\;\bar{v}\in [1, +\infty),\\[0.2cm]
\frac{1}{1-\bar{v}}&\mbox{if}\;\bar{v}\in (-\infty, -1].\\[0.2cm]
\end{array}
\right.
\]
In contrast, $S$ has the Lipschitz-like property   (without relative to a set) at $0$ for $0$ if and only if $\bar{v}\not\in [-1, 1]$. Moreover, we have
\[
 {\lip}_{\R_+} S(0\mid 0)< \lip S(0\mid 0)=\left\{
\begin{array}{ll}
+\infty &\mbox{if}\;\bar{v}\in [-1, 1],\\[0.2cm]
\frac{1}{\bar{v}-1}&\mbox{if}\;\bar{v}\in (1, +\infty),\\[0.2cm]
\frac{1}{-1-\bar{v}}&\mbox{if}\;\bar{v}\in (-\infty, -1),\\[0.2cm]
\end{array}
\right.
\]
where the equality follows from (\ref{momo}).
 \end{example}
 
\section{Conclusions}

By virtue of a newly-introduced projectional coderivative, we obtained a generalized Mordukhovich criterion for characterizing the Lipschitz-like property of a set-valued mapping relative to a closed and convex set, where the candidate parameter under consideration can be at the boundary of the set. We then applied this criterion to show for an extended real-valued function that its relative Lipschitzian continuity is equivalent to the local boundness of its projectional subdifferential and that for a given vector the Lipschitz-like property of the level-set mapping relative to a closed half line is equivalent to this vector being not in the outer limiting subdifferential. It is worth noting that the projection of the normal cone of the graph of the set-valued mapping onto the tangent cone of the set has played a very important role in our approach.

\begin{thebibliography}{99}

\bibitem{ac14} Adly S. and Cibulka R. Quantitative stability of a generalized equation application to non-regular electrical circuits. {\it J. Optim. Theory Appl.}, 160(1) (2014) pp. 90-110.

\bibitem{ai06}  Arutyunov A.V. and Izmailov A.F. Directional stability theorem and directional metric regularity. {\it Math. Oper. Res.}, 31(3) (2006) pp. 526-543.

\bibitem{a84} Aubin J.-P. Lipschitz behavior of solutions to convex minimization problems. {\it Math. Oper. Res.}, 9 (1984) pp. 87-111.

\bibitem{bgo19} Benko M., Gfrerer H. and Outrata J.V. Stability analysis for parameterized variational systems with implicit constraints. {\it Set-Valued Var. Anal.}, 27(3) (2019) pp. 713-745.

\bibitem{bs00} Bonnans J.F. and Sharpiro A. {\it Perturbation Analysis of Optimization Problems}. Springer-Verlag, New York, 2000.

\bibitem{clmp09} C\'{a}novas M.J., L\'{o}pez M.A., Mordukhovich B.S. and Parra J. Variational analysis in semi-infinite and infinite programming, I: Stability of linear inequality systems of feasible solutions. {\it SIAM J. Optim.}, 20 (2009)  pp. 1504-1526.

\bibitem{ca2} C\'{a}novas M.J., Henrion R., L\'{o}pez M.A. and Parra J. Outer limit of subdifferentials and calmness moduli in linear and nonlinear programming. {\it J. Optim. Theory Appl.}, 169 (2016) pp. 925-952.

\bibitem{dr96} Dontchev A.L. and Rockafellar R.T. Characterizations of strong regularity for variational inequalities over polyhedral convex sets. {\it SIAM J. Optim.}, 6(4) (1996) pp. 1087-1105.

\bibitem{dr14} Dontchev A.L. and Rockafellar R.T. {\it Implicit Functions and Solution Mappings.} Springer, New York, 2014.

\bibitem{ers2019} Eberhard A., Roshchina V. and Sang T. Outer limits of subdifferentials for min-max type functions. {\it Optimization},  68(7) (2019) pp. 1391-1409.

\bibitem{fhko10} Fabian M.J.,  Henrion R., Kruger A.Y. and Outrata J.V. Error bounds: necessary and sufficient conditions. {\it Set-Valued Var. Anal.}, 18 (2010) pp. 121-149.

\bibitem{g13} Gfrerer H. On directional metric regularity, subregularity and optimality conditions for nonsmooth mathematical programs. {\it Set-Valued Var. Anal.}, 21 (2013) pp. 151-176.

\bibitem{gfr14} Gfrerer H. Optimality conditions for disjunctive programs based on generalized differentiation with application to mathematical programs with equilibrium constraints. {\it SIAM J. Optim.}, 24(2) (2014)  pp. 898-931.

\bibitem{hy16} Huyen D.T. and Yen N.D. Coderivatives and the solution map of a linear constraint system. {\it SIAM J. Optim.}, 26(2) (2016) pp. 986-1007.

\bibitem{ht15} Huynh V.N. and Thera  M. Directional metric regularity of multifunctions. {\it Math. Oper. Res.}, 40(4) (2015)  pp. 969-991.


\bibitem{io15} Ioffe A.D. Metric regularity-a survey, Part 1, theory.  {\it J. Aust. Math. Soc.}, 101 (2016) pp. 188-243.

\bibitem{io08}  Ioffe A.D. and Outrata J.V. On metric and calmness qualification conditions in subdifferential calculus. {\it Set-Valued Anal.},  16 (2008) pp. 199-227.

\bibitem{knt10}  Kruger A.Y., Ngai H.V. and Th\'{e}ra M. Stability of error bounds for convex constraint systems in Banach spaces. {\it SIAM J. Optim.}, 20 (2010) pp. 3280-3296.

\bibitem{ly14} Lee G.M. and Yen N.D. Coderivatives of a Karush-Kuhn-Tucker point set map and applications. {\it Nonlinear Anal.}, 95 (2014) pp. 191-201.

\bibitem{levym04} Levy A. and Mordukhovich B.S. Coderivatives in parametric optimization. {\it Math. Program.}, 99 (2004) pp. 311-327

\bibitem{lmy2017} Li M.H., Meng K.W. and Yang X.Q. On error bound moduli for locally Lipschitz and regular functions. {\it Math. Program.}, 171 (2018) pp. 463-487.

\bibitem{mor03} Mordukhovich B.S. Complete characterization of openness, metric regularity, and Lipschitzian properties of multifunctions. {\it Trans. Amer. Math. Soc.}, 340(1) (1993)  pp. 1-35.

\bibitem{mor05} Mordukhovich, B.S. and Nam, N.M. Subgradient of distance functions with applications to Lipschitzian stability. {\it Math. Program.}, 104(2-3) (2005) pp.635-668.

\bibitem{mor} Mordukhovich B.S. {\it Variational Analysis and Generalized Differentiation I: Basic Theory}. Springer, 2 edition, 2013.

\bibitem{mor18} Mordukhovich B.S. {\it Variational Analysis and Applications}. Cham:  Springer, 2018.

\bibitem{roc} Rockafellar R.T. and Wets R.J.-B. {\it Variational Analysis}. Springer, Berlin, 1998.

\bibitem{rock70} Rockafellar R.T. {\it Convex Analysis}. Princeton University Press, Princeton, NJ, 1970.


\end{thebibliography}
\end{document}